\theoremstyle{plain}
\newtheorem{theorem}{Theorem}
\newtheorem*{theorem*}{Theorem}
\newtheorem{lemma}[theorem]{Lemma}
\newtheorem*{lemma*}{Lemma}
\newtheorem{corollary}[theorem]{Corollary}
\newtheorem*{corollary*}{Corollary}
\newtheorem*{proposition*}{Proposition}
\newtheorem*{claim*}{Claim}
\newtheorem*{namedtheorem}{\theoremname}
\theoremstyle{definition}
\newtheorem{definition}[theorem]{Definition}
\newtheorem*{definition*}{Definition}
\newtheorem*{notation*}{Notation}
\newtheorem*{example*}{Example}
\newtheorem*{examples*}{Examples}
\theoremstyle{remark}
\newtheorem{remark}[theorem]{Remark}
\newtheorem*{remark*}{Remark}
\newtheorem*{note*}{Note}
\newtheorem*{acknowledgements*}{Acknowledgements}
\newcommand{\co}{\colon}
\newcommand{\case}{\emph{Case}}
\newcommand{\id}{\mathrm{id}}
\newcommand{\set}[1]{\{#1\}}
\renewcommand{\phi}{\varphi}
\renewcommand{\epsilon}{\varepsilon}
\newcommand{\us}{\vec u}
\newcommand{\vs}{\vec v}
\newcommand{\ws}{\vec w}
\newcommand{\ms}{\vec m}
\newcommand{\substpar}[1]{(#1)}
\newcommand{\esubst}[2]{#1 / #2}
\newcommand{\subst}[2]{\substpar{\esubst #1 #2}}
\newcommand{\II}{\mathbb{I}}
\newcommand{\emptyctx}{1} 
\newcommand{\pp}{\mathsf{p}}       
\newcommand{\qq}{\mathsf{q}}       
\newcommand{\CC}{\mathcal{C}}   
\newcommand{\op}{\mathrm{op}}
\newcommand{\Set}{\mathbf{Set}}
\DeclareMathOperator{\Hom}{Hom}
\renewcommand{\deg}{s} 
\newcommand{\indep}{\mathrel{\#}} 
\DeclareMathOperator{\yoneda}{\mathbf{y}}
\DeclareMathOperator{\Ty}{Ty}
\DeclareMathOperator{\Ter}{Ter}
\DeclareMathOperator{\Fill}{Fill}
\DeclareMathOperator{\KTy}{KTy}
\DeclareMathOperator{\AFib}{Contr}
\newcommand{\Path}{\mathsf{Path}}
\newcommand{\Pth}[1]{\mathsf{Path}_{#1}}
\newcommand{\nabs}[1]{\langle #1 \rangle}
\newcommand{\napp}{\mathop{\boldsymbol{@}}}
\newcommand{\ap}{\mathsf{ap}}
\newcommand{\Id}[1]{\mathsf{Id}_{#1}}
\DeclareMathOperator{\refl}{\mathsf{refl}}
\newcommand{\transp}{\mathsf{T_{\El}}} 
\newcommand{\trpeqv}{\mathsf{T^{\Equiv}_{\El}}} 
\newcommand{\fstr}{\kappa}  
\newcommand{\ext}{\mathsf{ext}} 
\newcommand{\Equiv}{\mathsf{Equiv}}
\newcommand{\app}{\mathsf{app}}
\DeclareMathOperator{\isContr}{\mathsf{isContr}}
\DeclareMathOperator{\isEquiv}{\mathsf{isEquiv}}
\newcommand{\fib}{\mathsf{fib}}
\DeclareMathOperator{\Gl}{\mathsf{G}} 
\DeclareMathOperator{\fGl}{\underline\Gl}
\newcommand{\ugl}{\mathsf{ug}}
\newcommand{\U}{\mathsf{U}}
\DeclareMathOperator{\El}{\mathsf{El}}
\DeclareMathOperator{\fEl}{\underline{\mathsf{El}}}
\newcommand{\code}[1]{\ulcorner #1 \urcorner}
\newcommand{\uaetop}{\mathsf{ua}}
\newcommand{\uabeta}{\mathsf{ua}_\beta}
\renewcommand{\i}{\mathsf{i}}
\title{The univalence axiom in cubical sets}
\date{\today}
\author{Marc Bezem}
\address{Department of Informatics\\
  University of Bergen\\
  Postboks 7800\\
  N-5020 Bergen\\
  Norway}
\email{bezem@ii.uib.no}%
\author{Thierry Coquand}
\address{Department of Computer Science and Engineering\\
  University of Gothenburg\\
  SE-412 96 Göteborg\\
  Sweden}
\email{thierry.coquand@cse.gu.se}
\author{Simon Huber}
\address{Department of Computer Science and Engineering\\
  University of Gothenburg\\
  SE-412 96 Göteborg\\
  Sweden}
\email{simon.huber@cse.gu.se}
\begin{document}

\begin{abstract}
  In this note we show that Voevodsky's univalence axiom holds in the
  model of type theory based on cubical sets as described
  in~\cite{BezemCoquandHuber14,Huber15}.  We will also discuss Swan's
  construction of the identity type in this variation of cubical sets.
  This proves that we have a model of type theory supporting dependent
  products, dependent sums, univalent universes, and identity types
  with the usual judgmental equality, and this model is formulated in
  a constructive metatheory.
\end{abstract}

\maketitle

\section{Review of the cubical set model}
\label{sec:review}

We give a brief overview of the cubical set model, introducing some
different notations, but will otherwise assume the reader is familiar
with~\cite{BezemCoquandHuber14,Huber15}.

As opposed to~\cite{BezemCoquandHuber14,Huber15} let us define cubical
sets as contravariant presheaves on the opposite of the category used
there, that is, the category of cubes $\CC$ contains as objects finite
sets $I = \set{i_1,\dots,i_n}$ ($n \ge 0$) of names and a morphism $f
\co J \to I$ is given by a set-theoretic map $I \to J \cup \set{0,1}$
which is injective when restricted to the preimage of $J$; we will
write compositions in applicative order.  The category of cubical sets
is the category $[\CC^\op,\Set]$ of presheaves on $\CC$.  A morphism
$f \co J \to I$ in $\CC$ can be viewed as a substitution.  If $f(i)
\in J$, we call $f$ \emph{defined on} $i$.  For $i \notin I$, the face
morphisms are denoted by $\subst i 0, \subst i 1 \co I \to I,i$ and
are induced by setting $i$ to $0$ and $1$, respectively; degenerating
along $i \notin I$ is denoted by $\deg_i \co I,i \to I$ and is induced
by the inclusion $I \subseteq I,i$.

If $\Gamma$ is a cubical set, we write $\Ty (\Gamma)$ for the
collection/class of presheaves on the category of elements of
$\Gamma$~\cite{BezemCoquandHuber14,Huber15}.  Such a presheaf $A \in
\Ty (\Gamma)$ is given by a family of sets $A (I,\rho)$ for $I \in
\CC$ and $\rho \in \Gamma(I)$ together with restriction functions.  As
$\rho \in \Gamma(I)$ determines $I$ we simply write $A \rho$ for $A
(I,\rho)$.  Given $A \in \Ty(\Gamma)$ and a natural transformation
(substitution) $\sigma \co \Delta \to \Gamma$ we get $A \sigma \in \Ty
(\Delta)$ defined as $(A \sigma) \rho = A (\sigma \rho)$ which
extends canonically to the restrictions.
For $A \in \Ty (\Gamma)$ we denote the set of sections of $A$ by
$\Ter(\Gamma, A)$; so $a \in \Ter(\Gamma,A)$ is given by a family $a
\rho \in A \rho$ for $\rho \in \Gamma (I)$ such that $(a \rho) f = a
(\rho f)$ for $f \co J \to I$.  Substitution also extends to terms via
$(a\sigma) \rho = a (\sigma \rho)$.

Let us recall the construction of $\Pi$-types: $\Pi\,A\,B \in
\Ty(\Gamma)$ for $A \in \Ty(\Gamma)$ and $B \in \Ty(\Gamma.A)$ is
given by letting each element $w$ of $(\Pi\,A\,B)\rho$ (with $\rho \in
\Gamma(I)$) be a family of $w_f\,a \in B (\rho f, a)$ for $f \co J \to
I$ and $a \in A\rho$ satisfying $(w_f\,a)g = w_{fg}\,(a g)$; the
restriction of such a $w$ is given by $(wf)_g = w_{fg}$.  In the
sequel we will however only have to refer to $w_f$ when $f$ is the
identity, and will thus simply write $w\,a$ for $w_\id\,a$.  We also
occasionally switch between sections in $\Ter(\Gamma.A, B)$ and
$\Ter(\Gamma,\Pi A B)$ without warning the reader.

Let $A \in \Ty (\Gamma)$, $\rho \in \Gamma (I)$, and $J \subseteq I$.
A \emph{$J$-tube} in $A$ over $\rho$ is given by a family $\us$ of
elements $u_{jc} \in A \rho \subst j c$ for $(j,c) \in J \times
\set{0,1}$ which is adjacent compatible, that is, $u_{jc} \subst k d =
u_{k d} \subst j c$ for $(j,c),(k,d) \in J \times \set{0,1}$.  For
$(i,a) \in (I - J) \times \set{0,1}$ we say that an element $u_{ia}
\in A \rho \subst i a$ is a \emph{lid} of such a tube $\us$ if $u_{jc}
\subst i a = u_{ia} \subst j c$ for all $(j,c) \in J \times
\set{0,1}$.  In this situation we call the pair $[J \mapsto \us; (i,a)
\mapsto u_{ia}]$ an \emph{open box} in $A$ over $\rho$.  A
\emph{filler} for such an open box is an element $u \in A \rho$ such
that $u \subst j c = u_{jc}$ for $(j,c) \in \set{(i,a)} \cup (J \times
\set{0,1})$.  In case $J$ is empty, we simply write $[(i,a) \mapsto
u_{ia}]$.

Given $f \co K \to I$ and an open box $m = [J \mapsto \us; (i,a)
\mapsto u_{ia}]$ in $A$ over $\rho$ we call $f$ allowed for $m$ if $f$
is defined on $J,i$.  In this case we define the open box $mf$ in $A$
in $\rho f$ to be $[J f \mapsto \us f; (f (i), a) \mapsto u_{ia} (f -
i)]$ where $\us f$ is given by $(\us f)_{f(j)\, c} = u_{j c} (f - j)$
with $f - i \co K - {f (i)} \to I-i$ being like $f$ but skipping $i$,
and $J f$ is the image of $J$ under $f$.

Recall from~\cite[Section~4]{BezemCoquandHuber14} that a
\emph{(uniform) Kan structure} for a type $A \in \Ty(\Gamma)$ is given
by an operation $\fstr$ which (uniformly) fills open boxes: for any
$\rho \in \Gamma(I)$ and open box $m$ in $A$ over $\rho$ we get a filler
$\fstr\,\rho\,m$ of $m$ subject to the uniformity condition
\[
(\fstr\,\rho\,m) f = \fstr\,(\rho f)\,(m f)
\]
for all $f \co K \to I$ allowed for $m$.

Any Kan structure $\fstr$ defines a \emph{composition operation}
$\bar\fstr$ which provides the missing lid of the open box, given by:
\begin{align*}
  \bar\fstr\,\rho\,[J \mapsto \us; (i,0) \mapsto u_{i0}]
  &= (\fstr\,\rho\,[J \mapsto \us; (i,0) \mapsto u_{i0}]) \subst i 1
  \\
  \bar\fstr\,\rho\,[J \mapsto \us; (i,1) \mapsto u_{i1}]
  &= (\fstr\,\rho\,[J \mapsto \us; (i,1) \mapsto u_{i1}]) \subst i 0
\end{align*}

We denote the set of all Kan structures on $A \in \Ty (\Gamma)$ as
$\Fill (\Gamma, A)$.  If $\sigma \co \Delta \to \Gamma$ and $\fstr$ is
an element in $\Fill (\Gamma, A)$, we get an element $\fstr \sigma$ in
$\Fill (\Delta, A \sigma)$ defined by $(\fstr \sigma) \, \rho =
\fstr \, (\sigma \rho)$.

Given a cubical set $\Gamma$ a \emph{Kan type} is a pair $(A,\fstr)$
where $A \in \Ty (\Gamma)$ and $\fstr \in \Fill (\Gamma,A)$.  We
denote the collection of all such Kan types by $\KTy (\Gamma)$.
In~\cite{BezemCoquandHuber14} we showed that Kan types are closed
under dependent products and sums constituting a model of type theory.

\section{Path types}\label{sec:paths}

In~\cite{BezemCoquandHuber14} we introduced identity types which were
however only ``weak'', e.g., transport along reflexivity is only
propositionally equal to the identity function but not necessarily
judgmentally equal.  For this reason we will call these types
\emph{path types} and reserve $\Id{A}$ for the identity type with the
usual judgmental equality defined in Section~\ref{sec:Id}.

Recall that the path type $\Pth{A}\,u\,v \in \Ty(\Gamma)$ for $A \in
\Ty (\Gamma)$ and $u,v \in \Ter (\Gamma,A)$ is defined by the sets
$(\Pth{A}\,u\,v)\rho$ containing equivalence classes $\nabs i w$ where
$i \notin I$ and $w\in A\rho\deg_i$ with $w\subst i 0 = u \rho$ and
$w\subst i 1 = v \rho$.  Restrictions are defined as expected, and we
showed that Kan types are closed under forming path
types~\cite{BezemCoquandHuber14}.

It will be convenient below to introduce paths using separated
products.

\begin{definition}\label{def:separation}
  Given cubical sets $\Gamma$ and $\Delta$, we say that
  $u\in\Gamma(I)$ and $v\in\Delta(I)$ are \emph{separated}, denoted by
  $u \indep v$, if they come through degeneration from cubes with
  disjoint sets of directions. More precisely, if there are $J
  \subseteq I$, $K \subseteq I$ with $J \cap K = \emptyset$ and $u'
  \in \Gamma (J), v' \in \Delta (K)$ such that $u = u' \deg$ and $v =
  v' \deg'$ with $\deg$ and $\deg'$ induced by the inclusion $J
  \subseteq I$ and $K \subseteq I$, respectively.

  The \emph{separated product} $\Gamma * \Delta$ of $\Gamma$ and
  $\Delta$ is the cubical set defined by
  \[
    (\Gamma * \Delta) (I) =%
    \set{ (u,v) \in \Gamma (I) \times \Delta (I)\mid u \indep v }
    \subseteq (\Gamma \times \Delta) (I).
  \]
  The restrictions are inherited from $\Gamma \times \Delta$, that is,
  they are defined component wise.  It can be shown that ${-} * {-}$
  extends to a functor, and that ${-} * \Delta$ has a right
  adjoint.  
\end{definition}

Of particular interest is $\Gamma*\II$ where $\II$ is the interval
defined by $\II (J) = J \cup \set {0,1}$
(see~\cite[Section~6.1]{BezemCoquandHuber14}). Then
\[
(\Gamma * \II) (I) =%
(\Gamma (I) \times \set {0,1}) \cup%
\set { (\rho \deg_i, i) \mid i \in I \land \rho \in \Gamma (I-i)}.
\]
If $(\rho, i) \in (\Gamma * \II) (I)$ with $i\in I$, then $\rho =
\rho' \deg_i$ for a uniquely determined $\rho'$ which we denote by
$\rho - i$.

We can use $\Gamma*\II$ to formulate the following introduction
rule for path types
\begin{mathpar}
  \inferrule %
  { A \in \Ty (\Gamma) \\
    w \in \Ter (\Gamma * \II, A \pp) } %
  { \nabs {} w \in \Ter(\Gamma, \Pth{A}\,w[0]\,w[1])}%
\end{mathpar}
where $[0],[1] \co \Gamma \to \Gamma * \II$ are induced by the global
elements $0$ and $1$ of $\II$, respectively, and $\pp \co \Gamma*\II
\to \Gamma$ is the first projection.  The binding operation is
interpreted by $(\nabs {} w) \rho = \nabs i \, w (\rho \deg_i, i)$
with $i$ a fresh name (see~\cite[Section~8.2]{BezemCoquandHuber14}).

Given an element $\nabs i w \in (\Pth{A}\,u\,v) \rho$ with $\rho \in
\Gamma (I)$, we set $(\nabs i w) \napp a = w \subst i a$ where $a$ is
$0$, $1$, or a fresh name.

\section{Equivalences and univalence}\label{sec:equis}

We will now recall the definition of an equivalence as a map having
contractible fibers and then derive an operation for contractible and
Kan types.  To enhance readability we define the following types using
variable names:
\begin{align*}
  \isContr A &= \Sigma (x : A)\, \Pi (y : A) \, \Pth{A}\,x\,y
  \\
  \fib\,t\,v &= \Sigma (x : A) \, \Pth{B}\,(t \, x)\,v
  \\
  \isEquiv t &= \Pi (y : B) \, \isContr (\fib\,t\,y)
  \\
  \Equiv\,A\,B &= \Sigma (t : A \to B) \, \isEquiv t
\end{align*}
where $A$ and $B$ are types, $t : A \to B$, and $v : B$ (all in an
ambient context $\Gamma$).  This can of course also be formally
written name-free: for example, the first type can be written as
$\Sigma A \Pi A\pp \, (\Pth{A\pp\pp} \qq\pp ~\qq) \in \Ty(\Gamma)$ and
the second one as $\Sigma A\pp\, (\Pth{B\pp\pp} \,\app
(t\pp\pp,\qq)\,\qq\pp) \in \Ty(\Gamma.B)$.

\begin{definition}
  \label{def:acyclic-fibration}
  A \emph{(uniform) acyclic-fibration structure} on a type $A \in \Ty
  (\Gamma)$ is given by an operation $\ext$ uniformly filling any
  tube, that is, given $\rho \in \Gamma (I)$, $J \subseteq I$, a
  $J$-tube $\us$ in $A \rho$, we have
  \[
  \ext \,\rho\,[J\mapsto {\us}] \in A \rho
  \]
  extending $\us$ (so $(\ext\,\rho\,[J\mapsto\us]) \subst i a =
  u_{ia}$ for $(i,a) \in J \times \set{0,1}$) and for $f \co K \to I$
  defined on $J$ we have
  \[
  (\ext\,\rho\,[J\mapsto {\us}]) f = \ext\,(\rho f)\,[J f \mapsto \us f].
  \]
  We denote the set of acyclic-fibration structures on $A \in
  \Ty(\Gamma)$ by $\AFib(\Gamma,A)$.
\end{definition}

Note that given $\ext \in \AFib (\Gamma,A)$ and $\sigma \co \Delta \to
\Gamma$ we obtain $\ext\, \sigma \in \AFib (\Delta, A\sigma)$ via
$(\ext \,\sigma)\, \rho = \ext \, (\sigma \rho)$.

\begin{lemma}
  \label{lem:contractible}
  Given a type $A$ in $\Ty(\Gamma)$ we have maps
  \[
    \begin{tikzcd}[column sep=2cm]
      \Fill(\Gamma,A) \times \Ter(\Gamma, \isContr A) %
      \arrow[r,"\phi",shift left] %
      &
      \arrow[l,"{\langle \psi_0,\psi_1 \rangle}",shift left]
      \AFib(\Gamma,A),
    \end{tikzcd}
  \]
  with $\phi \, {\langle \psi_0,\psi_1 \rangle} = \id$.  Moreover, these
  maps are natural: if $\sigma \co \Delta \to \Gamma$, then
  $(\phi\,\fstr\,p) \sigma = \phi\,(\fstr \sigma)\,(p \sigma)$,
  $(\psi_0\,\ext) \sigma = \psi_0\,(\ext\,\sigma)$, and $(\psi_1\,\ext)
  \sigma = \psi_1\,(\ext\,\sigma)$.
\end{lemma}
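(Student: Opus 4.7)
The plan is to construct both maps explicitly from the given data, and then verify $\phi \circ \langle \psi_0, \psi_1 \rangle = \id$ by unwinding definitions and applying the extension property of $\ext$.

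For $\phi$, suppose $\fstr \in \Fill(\Gamma, A)$ and $p \in \Ter(\Gamma, \isContr A)$, and write $p\rho = (x\rho, \gamma\rho)$ so that $x\rho \in A\rho$ is the centre and $\gamma\rho\,y$ is a path from $x\rho$ to $y$ for every $y \in A\rho$. To extend a $J$-tube $\us$ over $\rho \in \Gamma(I)$, pick a fresh $i \notin I$, and for each $(j,\epsilon) \in J \times \set{0,1}$ let $w_{j\epsilon} \in A(\rho\subst j \epsilon \deg_i)$ be the underlying cube of the path $\gamma(\rho\subst j \epsilon)\,u_{j\epsilon}$; so $w_{j\epsilon}\subst i 0 = x(\rho\subst j \epsilon)$ and $w_{j\epsilon}\subst i 1 = u_{j\epsilon}$. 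Naturality of $\gamma$ and adjacency of $\us$ show that $\ws$ is a $J$-tube over $\rho\deg_i$, while naturality of $x$ shows that $x\rho$ is a lid at $(i,0)$. Set
\[
  \phi(\fstr, p)\,\rho\,[J \mapsto \us] := \bar\fstr\,(\rho\deg_i)\,[J \mapsto \ws; (i, 0) \mapsto x\rho];
\]
this lies in $A\rho$, and its $\subst j \epsilon$-face is $w_{j\epsilon}\subst i 1 = u_{j\epsilon}$, so it extends $\us$. Uniformity follows from that of $\fstr$ together with $p$ being a term.

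For $\langle \psi_0, \psi_1 \rangle$, given $\ext \in \AFib(\Gamma, A)$ define $\psi_0\,\ext$ to fill an open box $[J \mapsto \us; (i, a) \mapsto u_{ia}]$ over $\rho$ in two stages: first apply $\ext$ at $\rho\subst i (1-a)$ to the $J$-tube $(u_{j\epsilon}\subst i (1-a))$ to obtain a face $u_{i,1-a} \in A(\rho\subst i (1-a))$ which, by the extension property, completes the data to a full $(J \cup \set{i})$-tube over $\rho$; then apply $\ext$ once more to this tube to produce the filler. Define $\psi_1\,\ext$ as the pair $(x, \gamma)$ with $x\rho := \ext\,\rho\,[\emptyset \mapsto {}]$ and, for $y \in A\rho$ and a fresh $i$, $\gamma\rho\,y := \nabs i \, \ext\,(\rho\deg_i)\,[\set{i} \mapsto (x\rho, y)]$, which is a path from $x\rho$ to $y$. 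Naturality of both $\psi_0\,\ext$ and $\psi_1\,\ext$ under any $\sigma \co \Delta \to \Gamma$ is immediate from uniformity of $\ext$.

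To see $\phi \circ \langle \psi_0, \psi_1 \rangle = \id$, take $\ext$ and set $(\fstr, p) := \langle \psi_0, \psi_1 \rangle\,\ext$. Unfolding $\phi(\fstr, p)\,\rho\,[J \mapsto \us]$, each $w_{j\epsilon}$ is now $\ext(\rho\subst j \epsilon \deg_i)[\set{i} \mapsto (x(\rho\subst j \epsilon), u_{j\epsilon})]$, so $w_{j\epsilon}\subst i 1 = u_{j\epsilon}$ by the extension property. Since $\fstr = \psi_0\,\ext$ fills by first constructing the missing face and then calling $\ext$, the missing $(i,1)$-face of $[J \mapsto \ws; (i, 0) \mapsto x\rho]$ is $\ext\,\rho\,[J \mapsto (w_{j\epsilon}\subst i 1)] = \ext\,\rho\,[J \mapsto \us]$; the subsequent $\ext$ of the resulting $(J \cup \set{i})$-tube, when restricted along $\subst i 1$, reproduces this very face, again by the extension property. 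Hence $\phi(\fstr, p)\,\rho\,[J \mapsto \us] = \ext\,\rho\,[J \mapsto \us]$. Naturality of $\phi$, $\psi_0$, $\psi_1$ is then a formal consequence of naturality of $\fstr$, $\ext$, and $p$ under substitution. The main obstacle is bookkeeping rather than conceptual: verifying adjacency of the $J$-tube $\ws$ (using that $p$ is a term, hence commutes with face maps) and carefully tracking how $\deg_i$, $\subst j \epsilon$, and $\subst i a$ interact in the final equality.
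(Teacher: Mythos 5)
Your proposal is correct and follows the paper's proof essentially step by step: you build $\phi\,\fstr\,p$ by composing an open box whose closed lid is the center of contraction and whose tube faces are the connecting paths supplied by $p.2$, you define $\psi_0\,\ext$ by first extending to the missing lid with $\ext$ and then extending the resulting full tube, and you take $\psi_1\,\ext$ to be the pair $(\ext\,\rho\,[\ ],\ \nabs i\,\ext\,(\rho\deg_i)\,[(i,0)\mapsto\ext\,\rho\,[\ ],(i,1)\mapsto a])$. Your verification of the section identity is in fact a bit more explicit than what the paper records (the paper states it as ``one can show''), but there is no substantive difference in approach.
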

\begin{proof}
  Let $\fstr \in \Fill(\Gamma,A)$ and $p \in \Ter (\Gamma, \isContr
  A)$. To define $\phi\,\fstr\,p \in \AFib(\Gamma,A)$, let $\rho \in
  \Gamma (I)$ and $\us$ a $J$-tube in $A$ over $\rho$.  We take a
  fresh dimension $i$ and form an open box with the center of
  contraction $p\rho.1$ at the closed end and $\us$ at the open end,
  connected by $p\rho.2$; filling this gives us an extension of $\us$.
  Formally:
  \[
    \phi\,\fstr\,p\,\rho\,[J\mapsto {\us}] =%
    \bar\fstr\,(\rho \deg_i)\,[J \mapsto (p \rho.2 \, \us) \napp i;
    (i,0) \mapsto p\rho.1]
  \]
  where $(p\rho.2 \, \us) \napp i$ is the $J$-tube given by $(p(\rho
  \subst j c).2\, u_{jc}) \napp i$ at side $(j,c) \in J \times
  \set{0,1}$.

  Conversely, let $\ext \in \AFib(\Gamma,A)$. To get a Kan structure
  we first fill the missing lid and then the interior, that is, we set
  \begin{multline*}
    \psi_0\,\ext\,\rho\,[J \mapsto \us; (i,0) \mapsto u_{i0}] =\\
    \ext\,\rho\,[J\mapsto \us, (i,0) \mapsto u_{i0}, (i,1) \mapsto
    \ext\,\rho\subst i 1\,[J\mapsto \us \subst i 1]],
  \end{multline*}
  and likewise for the other filling.  To define $\psi_1\,\ext\,\rho$
  we choose $\ext\,\rho\,[]$ as center of contraction, which is
  connected to any $a \in A \rho$ by the path
  \[
    \nabs i \, \ext\,(\rho \deg_i)\,[(i,0) \mapsto \ext\,\rho\,[],
    (i,1) \mapsto a].
  \]

  One can show uniformity, naturality, and that $\langle \psi_0,\psi_1
  \rangle$ is a section of $\phi$.
\end{proof}

Next we will define an operation $\Gl$ which allows us to transform an
equivalence into a ``path''\footnote{We will see later that this
  indeed induces a path in a universe whenever both types $A$ and $B$
  are small.}.  This operation was introduced
in~\cite{Coquand13} and motivated the ``glueing'' operation
of~\cite{CohenCoquandHuberMortberg15}.  We will define it in such a
way that the associated transport of this path is given by underlying
map of the equivalence.

A useful analogy is provided by the notion of \emph{pathover}, a
heterogeneous path lying over another path.  We shortly review this
notion from type theory with inductive equality.  Given a type family
$P\co T\to\U$ and a path $p\co x=_T y$ with its transport function
$p_*\co Px\to Py$. If $Px$ and $Py$ are different types, there is no
ordinary path connecting $u \co Px$ and $v \co Py$. Therefore the
pathovers connecting $u$ and $v$ are taken to be the paths of type
$p_* u =_{Py} v$ (in the fiber $Py$).

We apply the same idea to $\Gl t$, which should be a path from $A$ to
$B$ in $\U$ such that transport along this path is $t \co A\to B$.
For the type family $P$ we take $\id_\U$ such that $A$ and $B$ indeed
are fibers of $P$. Intuitively, a path from $A$ to $B$ is a set of
heterogeneous paths between elements $a$ of $A$ and $b$ of $B$.  We
want $t$ to be the transport function along the path from $A$ to $B$.
By analogy we would take $\Gl t$ to be the set of pathovers connecting
$a\co A$ and $b\co B$ defined as the set of paths in $B$ connecting
$t~a$ and $b$. However, since we must be able to recover the
startpoint $a$, we define $\Gl t$ to be the set of pairs consisting of
$a\co A$ and a path connecting $t~a$ and $b$. (Unlike $a$, the
endpoint $b$ can be recovered from the pathover and need not be
remembered.)

With the above informal explanation in mind, we define the operation
$\Gl$ first on cubical sets and then explain how it lifts to Kan
structures.  It satisfies the rules:
\begin{equation}
  \label{eq:glue-form}
  \inferrule %
  { A \in \Ty (\Gamma) \\
    B \in \Ty (\Gamma) \\
    t \in \Ter(\Gamma, A \to B) }%
  { \Gl t \in \Ty(\Gamma * \II)}%
  \qquad %
  \inferrule {}
  {  (\Gl t) [0] = A \in \Ty(\Gamma) \\\\
     (\Gl t) [1] = B \in \Ty(\Gamma)} %
\end{equation}
\begin{equation}
  \label{eq:glue-subst}
  \inferrule %
  { \sigma \co \Delta \to \Gamma \\
    A \in \Ty (\Gamma) \\
    B \in \Ty (\Gamma) \\
    t \in \Ter(\Gamma, A \to B)
  }%
  { (\Gl t) (\sigma * \id) = \Gl (t \sigma) \in \Ty(\Delta *
    \II) }%
\end{equation}
The latter rule expresses stability under substitutions.  Here and
below $\Gl$ (and $\ugl$ below) have $A$ and $B$ as implicit arguments.

\begin{definition}\label{def:Gluetype}
  Assume the premiss of \eqref{eq:glue-form} and define for every
  $\rho \in \Gamma (I)$:
  \begin{equation}
    \label{eq:glue-def}
    \begin{split}
      (\Gl t) (\rho, 0) &= A \rho, \text{ with restrictions as in
        $A$,}\\
      (\Gl t) (\rho, 1) &= B \rho, \text{ with restrictions as in $B$,
        and }\\
      (\Gl t) (\rho, i) &= \set { (u,v) \mid u \in A (\rho-i) \land v
        \in B \rho \land v \subst i 0 = t (\rho-i)\, u}.
    \end{split}
  \end{equation}
  In the last case $\rho \indep i$, so $\rho = (\rho - i) \deg_i$.
  The restrictions in the latter case are a little involved.  We need
  $(u,v)f \in (\Gl t) (\rho f, f(i))$ for $f \co J \to I$.  If
  $f(i)=0$, we take $(u,v)f = u \deg_i f$, indeed in $A\rho f$.  If
  $f(i)=1$, we take $(u,v)f = vf$, indeed in $B\rho f$.  Finally, if
  $f$ is defined on $i$, we have $f - i \co J - {f (i)} \to I-i$ and
  we define $(u,v) f = (u (f-i), v f)$, which is indeed correct as
  $(\rho-i)(f-i) = \rho f -f(i)$ under the given assumptions.  It can
  then be checked that the restrictions satisfy the presheaf
  requirements.  This concludes the definition of $\Gl t$.
\end{definition}

We have a map $\ugl \in \Ter (\Gamma * \II.\Gl t, B \pp)$ given by:
\begin{mathpar}
  \ugl ((\rho, 0), u) = t \rho\, u
  \and %
  \ugl ((\rho,1),v) = v
  \and %
  \ugl ((\rho,i),(u,v)) = v
\end{mathpar}

The fact that a map $t \in \Ter (\Gamma,A \to B)$ is an equivalence
can be represented as an element of $\AFib(\Gamma.B, \fib\,t)$.  By
Lemma~\ref{lem:contractible} this is the case whenever $A$ and $B$
have Kan structures and the fibers of $t$ are contractible.

\begin{theorem}
  \label{thm:glue-kan}
  The operation $\Gl$ can be lifted to Kan structures provided $t$ is
  an equivalence, i.e., there is an operation $\fGl$ which given the
  premiss of \eqref{eq:glue-form} and $\fstr_A \in \Fill (\Gamma,A)$,
  $\fstr_B \in \Fill (\Gamma,B)$, and $\ext \in \AFib(\Gamma.B,
  \fib\,t)$ returns $\fGl\,\fstr_A\,\fstr_B\,\ext \in
  \Fill(\Gamma*\II,\Gl t)$.  This operation satisfies
  \begin{align*}
    (\fGl\,\fstr_A\,\fstr_B\,\ext) [0] &= \fstr_A\\
    (\fGl\,\fstr_A\,\fstr_B\,\ext) [1] &= \fstr_B\\
    (\fGl\,\fstr_A\,\fstr_B\,\ext) (\sigma * \id) &= \fGl\,(\fstr_A
    \sigma)\,(\fstr_B \sigma)\,(\ext \,(\sigma \pp,\qq))
  \end{align*}
  where $\sigma \co \Delta \to \Gamma$.
\end{theorem}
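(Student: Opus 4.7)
The plan is to define the Kan structure $\fGl\,\fstr_A\,\fstr_B\,\ext$ by case analysis on the $\II$-component $x$ of the base cube. When $x = 0$ (respectively $x = 1$), the fibre of $\Gl t$ over $(\rho, 0)$ is $A\rho$ (resp.~$B\rho$) and every face of the open box automatically lies in $A$ (resp.~$B$), so I would set the filler to be $\fstr_A\,\rho\,m$ (resp.~$\fstr_B\,\rho\,m$). These clauses yield the equations $(\fGl\,\fstr_A\,\fstr_B\,\ext)[0] = \fstr_A$ and $(\fGl\,\fstr_A\,\fstr_B\,\ext)[1] = \fstr_B$ immediately by definition.

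The substantial case is $x = i \in I$, where the required filler is a pair $(u, v)$ with $u \in A(\rho - i)$, $v \in B\rho$, and $v\subst{i}{0} = t(\rho - i)\,u$. I would split further on whether the shape of the open box $m = [J \mapsto \us; (k, a) \mapsto u_{ka}]$ already determines $u$: namely, $i \in J$ or $(k, a) = (i, 0)$ forces $u = u_{i0}$ since the filler's $(i, 0)$-face is $u$. In these forced sub-cases I would take $u := u_{i0}$ and obtain $v$ by applying $\fstr_B$ to the $B$-open box whose tube combines the $B$-shadow of $m$ with the extra $(i, 0)$-face $t(\rho - i)\,u$ implied by the gluing condition; compatibility of this extended tube follows directly from the gluing conditions $v'_{jc}\subst{i}{0} = t(\rho - i)\subst{j}{c}\,u'_{jc}$ attached to each $u_{jc}$.

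In the remaining sub-cases---$i \notin J$ and $(k, a) \neq (i, 0)$---I would construct $(u, v)$ in three steps. First, fill the $B$-shadow of $m$ (obtained via $\ugl$) with $\fstr_B$ to produce a preliminary $v_0 \in B\rho$. Second, form an open box or tube in $\fib\,t$ over $(\rho - i, v_0\subst{i}{0})$ whose entries are the $A$-components of $m$ paired with the reflexivity paths furnished by the gluing conditions, and fill it either by $\ext$ directly when the data is a pure tube (case $k = i$, $a = 1$) or by the Kan structure $\psi_0(\ext)$ from Lemma~\ref{lem:contractible} when it is a genuine open box (case $k \neq i$); this produces $(u, p) \in \fib\,t(\rho - i, v_0\subst{i}{0})$, yielding $u$ with the correct $A$-face data and a path $p$ from $t(\rho - i)\,u$ to $v_0\subst{i}{0}$. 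Third, correct $v_0$ to $v$ using $p$: introduce a fresh dimension $\ell$ and fill an open box in $B$ over $\rho\deg_\ell$ whose lid at $(\ell, 1)$ is $v_0$, whose $(i, 0)$-tube entry is the underlying cube of $p$, and whose remaining tube entries are $\ell$-degenerate copies of the appropriate $B$-faces; take $v$ to be the $(\ell, 0)$-face of this filler. Compatibility of the auxiliary tube relies on $p\subst{j}{c}$ being reflexive, which is arranged by the choice of fibre-tube entries in step two.

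It remains to verify uniformity of $\fGl\,\fstr_A\,\fstr_B\,\ext$ under $f \co K \to I$ allowed for $m$, together with the stated substitution law. The key observation is that $f$-restriction preserves the sub-case classification in a useful way: when $i \in J \cup \set k$, allowedness of $f$ forces $f(i) \in K$, so the restricted open box stays in the same sub-case; when $i \notin J \cup \set k$, $f(i)$ may fall in $\set{0, 1}$, in which case the restricted filler must agree with the easy-case definition. The main obstacle is precisely this last coherence: ensuring that the $u$ extracted in step two restricts to the $\fstr_A$-fill of the restricted open box when $f(i) = 0$, and that the corrected $v$ restricts to the $\fstr_B$-fill when $f(i) = 1$. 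Everything else---uniformity within each sub-case, the two boundary equations and the substitution law---then reduces to the corresponding properties of $\fstr_A$, $\fstr_B$, and $\ext$ together with the fact that the auxiliary construction commutes with $\sigma * \id$.
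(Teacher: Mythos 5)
Your endpoint cases and your four-way sub-case split track the structure of the paper's proof, and you correctly identify the danger: when $i \notin J$ and $i$ is not the fill direction $j$, a substitution $f$ with $f(i) \in \set{0,1}$ is still allowed (allowedness only constrains $J,j$), so the restricted filler must coincide with the $\fstr_A$-fill (if $f(i)=0$) or $\fstr_B$-fill (if $f(i)=1$) of the restricted open box. However, your construction for this very sub-case cannot satisfy that coherence. You produce the $A$-component $u$ of the filler by applying the Kan structure $\psi_0\,\ext$ on $\fib\,t$ to an open box of fibers. But $\psi_0\,\ext$ is manufactured from the acyclic-fibration structure $\ext$, and bears no a-priori relationship to $\fstr_A$. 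Restricting your filler $(u,v)$ along $f$ with $f(i)=0$ gives $u(f-i)$, which must equal $\fstr_A\,(\rho f)\,(m f)$ --- and there is no reason this holds. You flag this as ``the main obstacle'' and then assert that uniformity ``reduces to the corresponding properties'' of $\fstr_A,\fstr_B,\ext$; it does not. This is a genuine gap, not a loose end: the definition as written is simply not uniform.

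The paper avoids the problem by doing the opposite of what you do in this sub-case: it does not touch $\ext$ at all when $i \notin J$ and $j \neq i$. Instead it closes the tube in the $i$-direction first, setting $w_{i0} := \fstr_A\,(\rho-i)\,[J \mapsto \ws\subst i 0; (j,a)\mapsto w_{ja}\subst i 0]$ and $w_{i1} := \fstr_B\,(\rho-i)\,[J \mapsto \ws\subst i 1; (j,a)\mapsto w_{ja}\subst i 1]$, and only then reduces to the sub-case $i \in J$, where just $\fstr_B$ is needed. This is precisely what makes the $(i,0)$- and $(i,1)$-faces of the resulting filler equal to the $\fstr_A$- and $\fstr_B$-fills, so that uniformity under face maps on $i$ follows from uniformity of $\fstr_A$ and $\fstr_B$ alone. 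The operation $\ext$ is reserved for the single unavoidable sub-case $j = i$, $a = 1$, where the composition runs against $t$ and allowedness forbids $f(i) \in \set{0,1}$, so no such coherence constraint arises. (Two smaller slips in your step 2: the $\fib\,t$-tube sits over $v_0\subst i 1$, not $v_0\subst i 0$, and the paths $\nabs i\,v_{jc}$ paired with the $A$-components are in general nontrivial, not reflexivities --- but these are secondary to the uniformity problem.)
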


\begin{proof}
  To define $(\fGl\,\fstr_A\,\fstr_B\,\ext) (\rho,r)$ for $(\rho, r)
  \in (\Gamma * \II) (I)$ we argue by cases.  For $r = 0,1$ we take:
  \begin{align*}
    (\fGl\,\fstr_A\,\fstr_B\,\ext) (\rho,0) &= \fstr_A \, \rho\\
    (\fGl\,\fstr_A\,\fstr_B\,\ext) (\rho,1) &= \fstr_B \, \rho
  \end{align*}
  Let us now consider the main case where $r = i \in I$ is a name and
  thus $\rho \indep i$, $\rho = (\rho - i) \deg_i$.  We are given $j$
  (the name along which we fill), $\ws$ a $J$-tube in $(\Gl t)$ over
  $(\rho,i)$ (with $J \subseteq I - j$), and $w_{ja} \in (\Gl t)
  (\rho,i) \subst j a$ for $a = 0$ or $1$, which fits $\vec w$.  We
  want to define
  \[
  w := (\fGl\,\fstr_A\,\fstr_B\,\ext)\, (\rho,i)\,[ J \mapsto \ws;
  (j,a) \mapsto w_{ja}]
  \]
  in $(\Gl t) (\rho,i)$.  For this we have to construct $w = (u,v)$
  with $u \in A (\rho-i)$ and $v \in B\rho$ such that $v \subst i
  0 = t (\rho - i)\, u$.

  We can map $w_{ja},\ws$ using $\ugl$ and obtain an open box
  $v_{ja},\vs$ in $B$ over $\rho$ given by
  \[
  v_{kb} := \ugl ((\rho, i) \subst k b, w_{kb}) \in B \rho
  \subst k b.
  \]
  There are four cases to consider depending on how the open box
  relates to the direction $i$.  Each case will be illustrated
  afterwards with simplified $J$.  Note that in all these pictures the
  part in $A$ is mapped by $t$ to the left face of the part in $B$.
  Here are the four cases:
  \medskip

  \case\ $i \neq j$ and $i \notin J$. We extend the $J$-tube $\ws$ to
  $J,i$-tube by constructing $w_{i0}$ and $w_{i1}$ and then proceed as
  in the next case with the tube $\ws, w_{i0}, w_{i1}$.  Note that we
  want
  \begin{align*}
    w_{i0} &\in (\Gl t) (\rho,i) \subst i 0 = A (\rho - i), \text {
             and}
    \\
    w_{i1} &\in (\Gl t) (\rho,i) \subst i 1 = B (\rho - i),
  \end{align*}
  so we can take
  \begin{align}
    \label{eq:wlog-case-a}
    w_{i0}
    &= \fstr_A\,(\rho-i)\,[J \mapsto \ws \subst i 0; (j,a) \mapsto w_{ja}
       \subst i 0 ], \text{ and}
    \\
    \label{eq:wlog-case-b}
    w_{i1}
    & = \fstr_B\,(\rho-i)\,[J \mapsto \ws \subst i 1; (j,a) \mapsto w_{ja}
      \subst i 1 ].
  \end{align}
  The resulting open box is compatible by construction.  Note that
  this (together with the cases for $r=0$ and $r=1$) also ensures that
  the Kan structure satisfies the equations in~\eqref{eq:glue-form}.

  We illustrate this case in the picture below.  Here and below the
  left part is in $A$ and on the right we have the open box $\vec v$
  in $B$.  For simplicity we also omit $\rho$.  We construct $w_{i0}$
  and $w_{i1}$ by filling the open boxes indicated by thicker lines on
  the left and on the right, respectively.
  \begin{equation*}
    \begin{tikzpicture}
      [scale=1.5,
      baseline={([yshift=-.5ex]
        current bounding box.center)}] 
      \node at (0,-0.5,0.5)  {in $A$};
      \node at (1.75,-0.5,0.5) {in $B$};
      \coordinate (A00) at (0,0,0);
      \coordinate (A01) at (0,0,1);
      \coordinate (A10) at (0,1,0);
      \coordinate (A11) at (0,1,1);
      \draw [->,thick] (A01) --
      node [left] {\scriptsize$\ws\subst i 0$} (A11) ;
      \draw [->,thick] (A11) -- (A10) ;
      \draw [->,thick] (A01) -- (A00) ;
      \draw [->,dashed,thin] (A00) -- (A10) ;
      \coordinate (BL00) at (1.5,0,0);
      \coordinate (BL01) at (1.5,0,1);
      \coordinate (BL10) at (1.5,1,0);
      \coordinate (BL11) at (1.5,1,1);
      \coordinate (BR00) at (2.5,0,0);
      \coordinate (BR01) at (2.5,0,1);
      \coordinate (BR10) at (2.5,1,0);
      \coordinate (BR11) at (2.5,1,1);
      \draw [->] (BL01) -- (BL11) ;
      \draw [->] (BL11) -- (BL10) ;
      \draw [->] (BL01) -- (BL00) ;
      \draw [->,thick] (BR01) -- (BR11) ;
      \draw [->,thick] (BR11) -- (BR10) ;
      \draw [->,thick] (BR01) -- (BR00) ;
      \draw [->,dashed,thin] (BR00) -- (BR10) ;
      \draw [->] (BL00) -- (BR00) ;
      \draw [->] (BL01) -- (BR01) ;
      \draw [->] (BL10) -- (BR10) ;
      \draw [->] (BL11) -- (BR11) ;
      \node at (0,0.5,0.5)  {\scriptsize$w_{i0}$};
      \node at (2.5,0.5,0.5)  {\scriptsize$w_{i1}$};
      \draw [->] (3.5,0.25,0.75) -- node [left,near end]
      {\scriptsize$J$} (3.5,0.75,0.75);
      \draw [->] (3.5,0.25,0.75) -- node [right,near end]
      {\scriptsize$j$} (3.5,0.25,0.25);
      \draw [->] (3.5,0.25,0.75) -- node [below,near end]
      {\scriptsize$i$} (4,0.25,0.75);
    \end{tikzpicture}
  \end{equation*}

  \case\ $i \neq j$ and $i \in J$. In this case $v_{i0} =
  \ugl((\rho{\subst i 0},0),w_{i0}) = t \rho{\subst i 0} \, w_{i0} =
  t (\rho-i) \,w_{i0}$ since $\rho\indep i$.  We can therefore take $w =
  (w_{i0}, v) \in (\Gl t) (\rho,i)$ where $v = \fstr_B\,(\rho-i)\,[J
  \mapsto \vs; (j,a) \mapsto v_{ja}]$.  This can be illustrated by:
  \[
  \begin{tikzpicture}[scale=1.5]
    \draw [->] (0,0,1) -- node [left] {\scriptsize$w_{i0}$} (0,0,0) ;
    \draw [->] (1.5,0,1) --
    node [left] {\scriptsize$t\,w_{i0}$} (1.5,0,0) ;
    \draw [->,dashed] (1.5,0,0) -- (2.5,0,0) ;
    \draw [->] (1.5,0,1) -- (2.5,0,1) ;
    \draw [->] (2.5,0,1) -- (2.5,0,0) ;
    \draw [->] (3.5,0,0.75) -- node [left,near end]
    {\scriptsize$J$} (3.5,0,0.25);
    \draw [->] (3.5,0,0.75) -- node [above=-0.5ex,near end]
    {\scriptsize$i$} (4,0,0.75);
  \end{tikzpicture}
  \]

  \case\ $j = i$ and $a = 0$.  Like in the previous case we can take
  $w = (w_{i0}, v) \in (\Gl w) (\rho,i)$ where $v =
  \fstr_B\,(\rho-i)\,[J \mapsto \vs; (j,a) \mapsto v_{ja} ]$.
  This case is illustrated as follows:
  \[
  \begin{tikzpicture}[scale=1.5]
    \draw [->] (0,0,1) -- node [left] {\scriptsize$w_{i0}$} (0,0,0) ;
    \draw [->] (1.5,0,1) --
    node [left] {\scriptsize$t\,w_{i0}$} (1.5,0,0) ;
    \draw [->] (1.5,0,0) -- (2.5,0,0) ;
    \draw [->] (1.5,0,1) -- (2.5,0,1) ;
    \draw [->,dashed] (2.5,0,1) -- (2.5,0,0) ;
    \draw [->] (3.5,0,0.75) -- node [left,near end]
    {\scriptsize$J$} (3.5,0,0.25);
    \draw [->] (3.5,0,0.75) -- node [above=-0.5ex,near end]
    {\scriptsize$i$} (4,0,0.75);
  \end{tikzpicture}
  \]

  \case\ $j = i$ and $a = 1$. In this case the direction of the
  filling is opposite to $t$, and therefore we have to use $\ext$
  which expresses that $\fib\,t$ is contractible.
  The family $\ms$ defined by
  \[
  m_{kb} := (w_{kb}, \nabs i \, v_{kb}) \in (\fib\,t) ((\rho-i)
  \subst k b, w_{i1} \subst k b)
  \]
  for $(k,b) \in J \times \set{0,1}$ constitutes a $J$-tube over
  $(\rho-i,w_{i1})$ in the contractible type $\fib\,t \in
  \Ty(\Gamma.B)$.

  So we can extend this tube to obtain
  \[
    (u,\omega) = \ext\,(\rho-i,w_{i1})\,[J\mapsto\ms] \in
    (\fib\,t)(\rho-i,w_{i1})
  \]
  and we can take $w := (u,\omega \napp i) \in (\Gl t) (\rho,i)$.

  Let us illustrate this case: we are given the two dots on the left
  and the solid lines on the right in the picture below, and we want
  to construct the dashed line and a square on the right such that the
  dashed line is mapped to the dotted line via $t$, that is, we
  basically want to construct an element in the fiber of $w_{i1}$
  under $t$.
  \[
  \begin{tikzpicture}[scale=1.5]
    \draw [->,dashed] (0,0,1) -- (0,0,0) ;
    \filldraw [solid]
    (0,0,1) circle (0.5pt)
    (0,0,0) circle (0.5pt);
    \draw [->,dotted] (1.5,0,1) -- (1.5,0,0) ;
    \draw [->] (1.5,0,0) -- (2.5,0,0) ;
    \draw [->] (1.5,0,1) -- (2.5,0,1) ;
    \draw [->] (2.5,0,1) -- node [right] {\scriptsize$w_{i1}$} (2.5,0,0) ;
    \draw [->] (3.5,0,0.75) -- node [left,near end]
    {\scriptsize$J$} (3.5,0,0.25);
    \draw [->] (3.5,0,0.75) -- node [above=-0.5ex,near end]
    {\scriptsize$i$} (4,0,0.75);
  \end{tikzpicture}
  \]
  This concludes the definition of the filling operations of $\Gl t$.
  \medskip

  To see that this filling operation is uniform, note that for an $f
  \co K \to I$ defined on $j,J$ and on $i$ the case which defines the
  filling of $[ J \mapsto \ws; (j,a) \mapsto w_{ja}] f$ coincides with
  the case used to defined $[ J \mapsto \ws; (j,a) \mapsto w_{ja}]$ by
  the injectivity requirement on $f$---uniformity then follows for
  each case separately since we only used operations that suitably
  commute with $f$ in the definition of the filling.  If $f$ is only
  defined on $j,J$ but not on $i$, the first case has to apply---to
  simplify notation assume $f$ is $\subst i c$---then by construction
  (equations~\eqref{eq:wlog-case-a} and~\eqref{eq:wlog-case-b})
  \begin{multline*}
    \bigl((\fGl\,\fstr_A\,\fstr_B\,\ext)\, (\rho,i)\,[ J \mapsto
    \ws; (j,a) \mapsto w_{ja}] \bigr) \subst i c %
    =\\
    (\fGl\,\fstr_A\,\fstr_B\,\ext)\, (\rho-i,c)\,[ J \mapsto \ws
    \subst i c; (j,a) \mapsto (w_{ja} \subst i c)],
  \end{multline*}
  concluding the proof.
\end{proof}



\begin{theorem}\label{thm:glue-kan-refined}
  We can refine the Kan structure $\fGl\,\fstr_A\,\fstr_B\,\ext$ given
  in Theorem~\ref{thm:glue-kan} such that it satisfies
  \begin{equation*}
    (\overline{\fGl\,\fstr_A\,\fstr_B\,\ext}) \, (\rho,i) \,[(i,0)
    \mapsto u] = t \rho \, u.
  \end{equation*}
\end{theorem}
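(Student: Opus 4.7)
The open box $[(i,0)\mapsto u]$ over $(\rho,i)\in(\Gamma*\II)(I)$ with $i\in I$ a name (so $\rho\indep i$ and $\rho=(\rho-i)\deg_i$) falls under Case~3 ($j=i$, $a=0$) in the case analysis of Theorem~\ref{thm:glue-kan}, in the subcase where the $J$-tube is empty. In that subcase the filler was constructed as $(u,v)$ with $v\in B\rho$ required to satisfy $v\subst i 0=t(\rho-i)\,u$; the original proof obtained $v$ via $\fstr_B$, so the resulting missing lid $v\subst i 1$ depends on $\fstr_B$ and is not in general equal to $t(\rho-i)\,u$. The plan is to refine just this one subcase by choosing a specific degenerate $v$ that forces the missing lid to be $t(\rho-i)\,u$.

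Concretely, for $J=\emptyset$, $j=i$, $a=0$ I would set
\[
  v := \bigl(t(\rho-i)\,u\bigr)\deg_i \in B\bigl((\rho-i)\deg_i\bigr)=B\rho,
\]
and take the refined filler to be $(u,v)$. Since $\deg_i\cdot\subst i c=\id$ on $B(\rho-i)$ for either $c\in\set{0,1}$, we obtain $v\subst i 0=v\subst i 1=t(\rho-i)\,u$. The first equality shows that $(u,v)\in(\Gl t)(\rho,i)$ and, via the case $f(i)=0$ of the restriction rule in Definition~\ref{def:Gluetype}, that $(u,v)\subst i 0=u$; so $(u,v)$ is a valid filler. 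The second equality yields
\[
  \overline{\fGl\,\fstr_A\,\fstr_B\,\ext}\,(\rho,i)\,[(i,0)\mapsto u]=(u,v)\subst i 1=v\subst i 1=t(\rho-i)\,u,
\]
which under the convention that $t\rho$ implicitly restricts to $t(\rho-i)$ when applied to $u\in A(\rho-i)$ is the desired equation.

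The remaining task is to check that the refinement still defines a uniform Kan structure with all properties stated in Theorem~\ref{thm:glue-kan}. The key observation is that the ``shape'' of an open box (its index set $J$ and extra face $(j,a)$) is preserved under any allowed substitution $f\co K\to I$, because $f$ must be defined and injective on $J\cup\set{j}$; in particular the special shape $[(i,0)\mapsto u]$ restricts only to $[(f(i),0)\mapsto u(f-i)]$, again of the special shape. Hence the refined subcase does not interact with the other, unchanged cases. Uniformity within the refined subcase reduces to the identity
\[
  \bigl(t(\rho-i)\,u\bigr)\deg_i\cdot f=\bigl(t\bigl((\rho f)-f(i)\bigr)\,u(f-i)\bigr)\deg_{f(i)},
\]
which follows from naturality of $t$ together with the standard identity $\deg_i\cdot f=(f-i)\cdot\deg_{f(i)}$ in $\CC$. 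The boundary equations at $r=0,1$ and the substitution equation $(\fGl\,\fstr_A\,\fstr_B\,\ext)(\sigma*\id)=\fGl\,(\fstr_A\sigma)\,(\fstr_B\sigma)\,(\ext(\sigma\pp,\qq))$ are unaffected, as they do not involve this subcase. I do not foresee a serious obstacle; the refinement is a natural strengthening exploiting the degeneracy-face identity.
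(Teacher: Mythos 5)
Your proposal is correct and takes essentially the same approach as the paper: modify only the subcase $j=i$, $a=0$, $J=\emptyset$ by choosing the degenerate filler $(u,(t(\rho-i)\,u)\deg_i)$ (the paper writes this as $(u_{i0},t\rho\,u_{i0})$), and observe that uniformity is preserved because allowed restrictions preserve the shape $|J|$ of the open box. You have merely spelled out the degeneracy and naturality computations that the paper leaves implicit.
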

\begin{proof}
  We modify the Kan structure given in the proof of
  Theorem~\ref{thm:glue-kan} to obtain the above equations.  The last
  two cases in the proof above where $i=j$ are modified by an
  additional case distinction on whether $J$ is empty or not.  If $J$
  is not empty or $a = 1$, proceed as before. In case $J$ is empty and
  $a = 0$, then we are given $u_{i0} \in A (\rho-i)$ and an empty tube
  and can define $(\fGl\,\fstr_A\,\fstr_B\,\ext)\,{(\rho,i)}\,[(i,0)
  \mapsto u_{i0}] = (u_{i0}, t \rho \,u_{i0})$.  That this definition
  remains uniform is proved as in Theorem~\ref{thm:glue-kan} using the
  observation that $\lvert J \rvert = \lvert J f \rvert$ for $f$
  defined on $J$. In addition we retain stability under substitution.
\end{proof}

\begin{remark}
  \label{rem:glue-kan-refined-inv}
  It is also possible to change the Kan structure such that it
  satisfies
  \[
    (\overline{\fGl\,\fstr_A\,\fstr_B\,\ext}) \, (\rho,i) \,[(i,1)
    \mapsto u] = t^{-1} \rho \, u,
  \]
  where $t^{-1}$ is the inverse of $t$ which can be constructed from
  $\ext$.  For this one also has to modify the case where $J$ is empty
  and $a = 1$ from the definition of $\fGl$ using $t^{-1}$ and that
  $t^{-1}$ is a (point-wise) right inverse of $t$ (in the sense of
  path types).  The latter is also definable using $\ext$.
\end{remark}

Let us recall the definition of a universe $\U$ of small Kan types
(assuming a Gro\-then\-dieck universe of small sets in the ambient set
theory).  A type $A \in \Ty(\Gamma)$ is small if all the sets $A \rho$
for $\rho \in \Gamma(I)$ are so.  A Kan type $(A,\fstr) \in
\KTy(\Gamma)$ is small if $A \in \Ty(\Gamma)$ is small.  We denote the
set of all such small types and Kan types by $\Ty_0(\Gamma)$ and
$\KTy_0(\Gamma)$, respectively.  Substitution makes both $\Ty_0$ and
$\KTy_0$ into presheaves on the category of cubical sets.  The
universe $\U$ is now given as $U = \KTy_0 \circ \yoneda$ where
$\yoneda$ denotes the Yoneda embedding. For an $I$-cube $(A,\fstr) \in
\U(I) = \KTy_0(\yoneda I)$ we have that $A$ is a presheaf on the
category of elements of $\yoneda I$, and $A(J,f)$ is a small set for
every element $(J,f\co J \to I)$.  Moreover, $\kappa(J,f)$ is a filler
function for open boxes in $A$ over $(J,f)$.  Of particular interest
are the small set $A(I,\id_I)$ and filler function $\kappa(I,\id_I)$.

Given $a \in \Ter(\Gamma,\U)$ we can associate a small type $\El a$ in
$\Ty_0 (\Gamma)$ by $(\El a) \rho = A(I,\id_I)$ where $a \rho =
(A,\fstr)$.  We equip $\El a$ with the Kan structure $\fEl\,a$ defined
by $(\fEl\,a)\rho = \kappa(I,\id_I)$.  This results in an isomorphism
which is natural in $\Gamma$:
\[
  \begin{tikzpicture}
    \node (Ter) at (0,0) {$\Ter(\Gamma,\U) $};
    \node (KTy) at (3,0) {$\KTy_0 (\Gamma),$};
    \draw[->,transform canvas={yshift=0.25em}] (Ter) --
    node [above]{\scriptsize$\langle{\El},{\fEl}\rangle$} (KTy);
    \draw[->,transform canvas={yshift=-0.25em}] (KTy) --
    node [below]{\scriptsize${\code{-}}$} (Ter);
  \end{tikzpicture}
\]
where $\code {X} \rho = X \hat\rho \in \U(I)$ for $X \in \KTy_0
(\Gamma)$. Here $\hat\rho \co \yoneda I \to \Gamma$ is the associated
substitution of $\rho \in \Gamma(I)$, that is, $\hat\rho f = \rho
f \in \Gamma(J)$ for any $f \co J \to I$.  Since moreover
$\Hom(\Gamma,\U) \cong \Ter (\Gamma,\U)$, we get that
$\KTy_0$ is representable.

\begin{theorem}
  \label{thm:univ-kan}
  $\U$ has a Kan structure.
\end{theorem}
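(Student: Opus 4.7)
The plan is to define the Kan filling operation on $\U$ using the glueing operation $\fGl$ of Theorem~\ref{thm:glue-kan-refined}. Since $\U(I) = \KTy_0(\yoneda I)$, a filler for an open box in $\U$ over $\rho \in \Gamma(I)$ is a small Kan type over $\rho$, and by naturality it is enough to describe such a filler over an arbitrary $\Gamma$; the Kan structure on $\U$ itself is then recovered by specializing to $\Gamma = \yoneda I$ and $\rho = \id_I$.

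The first ingredient I need is the transport equivalence associated to any Kan type: for $(A, \fstr_A) \in \KTy(\Gamma)$ and a fresh direction $i$, the composition $\bar\fstr_A$ yields a map $A\subst i 0 \to A\subst i 1$ (and symmetrically in the other direction), and this map is an equivalence because $\fstr_A$ itself supplies the paths showing that its fibers are contractible. By Lemma~\ref{lem:contractible} the equivalence can be presented as an element of $\AFib$ suitable as the third argument of $\fGl$.

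Given an open box $m = [J \mapsto \us; (i,a) \mapsto u_{ia}]$ of small Kan types over $\rho$, I would construct the filler as $\Gl(t)$ in direction $i$, for an equivalence $t$ whose codomain is a small Kan type $T_1$ over $\rho\subst i \bar a$. The type $T_1$ is obtained by iteratively glueing $u_{ia}$ along the $J$-tube using the transport equivalences of the $u_{jc}$, and $t$ is the associated composite transport. The identities in~\eqref{eq:glue-form} ensure that $\Gl(t)$ restricts to $u_{ia}$ at $\subst i a$ and to $T_1$ at $\subst i \bar a$, while the stability rule~\eqref{eq:glue-subst} ensures that its restrictions on the $J$-faces match $\us$. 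Theorem~\ref{thm:glue-kan-refined} then equips $\Gl(t)$ with a Kan structure whose composition in direction $i$ recovers the underlying map $t$, which is what allows the construction to iterate consistently along the $J$-tube.

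The main obstacle is the coherent construction of $T_1$ and $t$, so that for each $(j,c) \in J \times \set{0,1}$ the restriction of $T_1$ matches $u_{jc}\subst i \bar a$ as a Kan type and the restriction of $t$ matches the transport equivalence of $u_{jc}$ along $i$. This is precisely where the refined glueing of Theorem~\ref{thm:glue-kan-refined} becomes essential: the equation $\overline{\fGl\,\fstr_A\,\fstr_B\,\ext}\,(\rho,i)\,[(i,0)\mapsto u] = t\rho\,u$ guarantees that when the construction is reapplied along neighbouring faces the boundaries of the resulting iterated $\Gl$'s agree on overlaps. Uniformity of the filling and its stability under substitution then follow formally from the corresponding properties of $\fGl$ and of the transport equivalences of the $u_{jc}$.
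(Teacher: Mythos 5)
The paper's own proof of this theorem is a citation to \cite[Theorem~4.2]{Huber15}, where it occupies several pages, so a one-paragraph sketch should be met with scepticism.  Your sketch does not work as stated, and the gap is exactly where you flag ``the main obstacle'': matching the $J$-faces of the filler to the given tube $\us$.

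Concretely, suppose you set the filler to be $\Gl(t)$ in direction $i$, for some equivalence $t$ with domain $u_{ia}$.  For this to fill the open box you need $\Gl(t)\subst j c = u_{jc}$ for every $(j,c)\in J\times\set{0,1}$.  But by Definition~\ref{def:Gluetype} together with the stability rule~\eqref{eq:glue-subst}, $\Gl(t)\subst j c = \Gl(t\subst j c)$: restriction along a face of the base just substitutes into $t$.  You would therefore need each $u_{jc}$ to already be equal (as a set-level presheaf) to some $\Gl(t\subst j c)$.  The $u_{jc}$ are, however, arbitrary small Kan types; nothing forces them to have the very particular ``pairs $(u,v)$ with $v\subst i 0 = t\,u$'' shape that $\Gl$ produces.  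So for any nonempty $J$ the construction simply does not return a filler of the given box.  Stability under substitution is the wrong tool here: it tells you that $\Gl$ commutes with reindexing, not that the reindexed $\Gl$ coincides with prescribed external data.  Likewise, the refined equation of Theorem~\ref{thm:glue-kan-refined} governs what composition of the filled type does to elements, not what the type's faces are; it cannot repair a mismatch at the level of the sets $(\Gl t)\rho$.  Iterating $\Gl$ along the $J$-directions runs into the same obstruction at the very first step.

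The underlying reason is that the $\Gl$ of Section~\ref{sec:equis} is a one-direction construction on $\Gamma * \II$, designed to turn an equivalence into a \emph{path} in $\U$ (for univalence, Theorem~\ref{thm:ua-path}), not to fill an open box with a nontrivial tube.  Producing a filler that is \emph{strictly} equal to the prescribed tube on all $J$-faces requires building the candidate presheaf by hand --- fixing its values at constrained shapes $f\co K\to I$ by the given data and defining its values at generic shapes so that everything is compatible --- and then equipping that presheaf with a uniform Kan structure.  That is what \cite[Theorem~4.2]{Huber15} does, and it is a substantially more delicate argument than an application of $\fGl$.  Also, a smaller point you elide: that $\bar\fstr_A$ gives an equivalence (with contractible fibers, in a form usable as the $\ext$ argument of $\fGl$) is itself something to prove, not a consequence of $\fstr_A$ ``supplying the paths'' directly; the paper does establish the relevant facts via Lemma~\ref{lem:contractible} and \cite[Section~8.2]{BezemCoquandHuber14}, but they do not come for free.
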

\begin{proof}
  \cite[Theorem~4.2]{Huber15}.
\end{proof}

We are now ready for the first main result of this paper.

\begin{theorem}[Univalence]
  \label{thm:ua-path}
  The type
  \[
    \Pi(a : \U)\, %
    \isContr \bigl(\Sigma (b : \U) \, \Equiv\,(\El a)\,(\El b) \bigr)
  \]
  in $\Ty (\emptyctx)$ has a section, where $\emptyctx$ denotes the empty context.
\end{theorem}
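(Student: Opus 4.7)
The plan is to construct, for each cube $I$ and each $a \in \U(I)$, an element of $\isContr(\Sigma(b : \U)\,\Equiv(\El a)(\El b))$, naturally in $a$, and then abstract over $a$ to obtain the required section in $\Ter(\emptyctx,\Pi(a:\U)\,\isContr(\ldots))$. The center of contraction is $(a,\id_{\El a},e_{\id})$, where $e_{\id}$ is the canonical proof that the identity is an equivalence: each fiber of $\id_{\El a}$ over $v$ is a singleton centered at the reflexivity path at $v$, and the contraction map is obtained directly from the Kan structure of $\El a$. Naturality of this center poses no difficulty since all its ingredients commute with substitution.

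For the contractibility map, given a second element $(b,t,q) \in \Sigma(b:\U)\,\Equiv(\El a)(\El b)$, I construct a path from the center to $(b,t,q)$ in two components. The first component, a path in $\U$, comes from glueing. Working in the ambient context $\Gamma = \emptyctx.\U.\U\pp.\Equiv(\El\qq\pp)(\El\qq)$, Lemma~\ref{lem:contractible} converts $q$ into an element $\ext \in \AFib(\Gamma.\El b,\fib\,t)$. Theorem~\ref{thm:glue-kan-refined} then yields the Kan type $(\Gl t,\fGl\,\fstr_{\El a}\,\fstr_{\El b}\,\ext)$ over $\Gamma * \II$ whose endpoints at $[0]$ and $[1]$ are $\El a$ and $\El b$ with their original Kan structures; coding this produces a term in $\Ter(\Gamma * \II, \U\pp)$, and hence via the $\nabs{}$-introduction rule a path $\pi \in \Ter(\Gamma,\Pth{\U}\,a\,b)$.

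The second component, a heterogeneous path of equivalences over $\pi$, is the more delicate part. For the underlying map I take $x \mapsto (x,t\,x)$ viewed as an element of $(\Gl t)(\rho,i)$ for a fresh $i$: unfolding $\Gl$ at the endpoints this restricts to $\id_{\El a}$ at $i=0$ and to $t$ at $i=1$, as required. For the equivalence witness, I connect $e_{\id}$ at $i=0$ to $q$ at $i=1$ using the fact that $\isEquiv\,f$ is propositional for any $f$ between Kan types: from Lemma~\ref{lem:contractible}, once an inhabitant of $\isEquiv\,f$ is given, the fibers of $f$ are contractible in a uniform way, and two contractibility witnesses are themselves connected by a path obtainable via filling. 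Composing this path with the heterogeneous underlying map gives the required pathover.

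The step I expect to be the main obstacle is the coherent assembly of the second component, in particular making the heterogeneous map match $\id_{\El a}$ and $t$ as endpoints \emph{judgmentally} and verifying uniformity in all substitutions. Here the refinement in Theorem~\ref{thm:glue-kan-refined} is essential: because composition along $\pi$ in the $0\to 1$ direction equals $t$ on the nose, the candidate underlying-map family does not require a cascade of corrective fillings to land at the prescribed endpoints. A subsidiary technical point will be checking that the $\isEquiv$-propositionality argument is carried out using filling structures and substitution-stable data already provided by the preceding lemmas, so that the whole construction lifts to a genuine natural section.
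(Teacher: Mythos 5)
Your proof takes a genuinely different route from the paper's.  You build a contraction directly: take $(a,\id_{\El a},e_{\id})$ as center, and for each $(b,t,q)$ construct an explicit path back to the center, with the path in $\U$ coming from $\Gl$, the underlying-map component given by the explicit pathover $x\mapsto(x,t\,x)$ in $\Gl t$, and the $\isEquiv$ component filled in by propositionality.  The paper instead avoids ever building that pathover: it produces a section-retraction pair $\uaetop$ and $\trpeqv$ between $\Equiv(\El a)(\El b)$ and $\Pth{\U}\,a\,b$ (using the refined Kan structure from Theorem~\ref{thm:glue-kan-refined} to get $\uabeta$, plus propositionality of $\isEquiv$), observes that $\Sigma(b:\U)\,\Pth{\U}\,a\,b$ is contractible because $\U$ is Kan (Theorem~\ref{thm:univ-kan}), and transfers contractibility along the retraction.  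Both arguments lean on $\Gl$ and on $\isEquiv$ being a proposition, but the retraction route sidesteps the most technical step of yours, the coherent pathover of equivalence witnesses over $\pi$: in the paper that work is replaced by the general facts that singletons are contractible and that contractibility is preserved by retracts.  Conversely your route does not actually need the contractibility of $\Sigma(b:\U)\,\Pth{\U}\,a\,b$ at all.

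One point in your write-up is off, though it doesn't sink the argument.  You say Theorem~\ref{thm:glue-kan-refined} is ``essential'' so that the underlying-map family lands at $\id_{\El a}$ and $t$ on the nose.  That is not where the judgmental matching comes from: the candidate $x\mapsto(x,t\,x)$ is an explicit element of $(\Gl t)(\rho,i)$, and its restrictions at $i=0$ and $i=1$ are $x$ and $t\,x$ by the very definition of the restrictions of $\Gl t$ (Definition~\ref{def:Gluetype}), with no Kan structure or composition involved.  Theorem~\ref{thm:glue-kan-refined} is what the paper uses for $\uabeta$, i.e.\ to control the behaviour of \emph{transport} along $\pi$, but your construction never transports along $\pi$.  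If you carry out your plan, the burden falls entirely on (i) constructing the $\isEquiv$ pathover in the model---which means transporting an inhabitant from $i=0$, using propositionality to connect it to $q$ at $i=1$, and composing, all while tracking uniformity under substitution---and (ii) checking that the resulting contraction is a genuine section over the appropriate context so that naturality is automatic.  Both can be done, but they are exactly the bookkeeping the paper's retraction argument is designed to avoid.
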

\begin{proof}
  Because our operation $\Gl$ preserves smallness we obtain an operation
  turning an equivalence between small Kan types into a path in $\U$:
  given $a \in \Ter(\Gamma, \U)$ and $b \in \Ter(\Gamma, \U)$ with $t
  \in \Ter(\Gamma, \Equiv\,(\El a)\,(\El\,b))$ we get a small type $\Gl
  (t.1) \in \Ty_0 (\Gamma * \II)$ which has the Kan structure $\fstr =
  \fGl\,(\fEl a)\,(\fEl b)\,\ext$ by Theorem~\ref{thm:glue-kan} where
  $\ext$ is constructed from $\fEl a$, $\fEl b$, and $t$ using
  Lemma~\ref{lem:contractible}.  Hence $\code{ (\Gl (t.1), \fstr)} \in
  \Ter(\Gamma *\II, \U)$ with $\code {(\Gl (t.1),\fstr)} [0] = \code
  {(\Gl (t.1) [0], \fstr [0])} = \code {(\El a,\fEl a)} = a$ and
  likewise $\code {(\Gl (t.1),\fstr)} [1] = b$.  Finally, abstracting
  gives a path $\nabs {} \code {(\Gl (t.1), \fstr)} \in \Ter(\Gamma,
  \Pth{\U}\,a\,b)$.

  Choosing $a : \U, b : \U, t : \Equiv\,(\El a)\,(\El b)$ as the context
  $\Gamma$ above we get using currying
  \[
    \uaetop \in \Ter\bigl(\emptyctx ,\Pi (a\,b : \U) (\Equiv\,(\El
    a)\,(\El b) \to \Pth{\U}\,a\,b)\bigr).
  \]
  Observe that we didn't use that $\Gl$ and its Kan structure commute
  with substitutions to derive $\uaetop$.

  In addition to $\uaetop$ we obtain a section $\uabeta$ of
  \[
    \Pi(a\,b : \U) \,\Pi(t : \Equiv\,(\El a)\,(\El b)) ~ \Pth{\El a
      \to \El b}\,(\transp\,(\uaetop\,t))\,(t.1)
  \]
  where $\transp : \Pth{\U}\,a\,b \to \El a \to \El b$ is the
  transport operation for paths for the type $(\El \qq,\fEl \qq) \in
  \KTy_0 (\U)$ (see the operation $\mathsf{T}$
  in~\cite[Section~8.2]{BezemCoquandHuber14}).  Indeed, the path to
  justify $\uabeta$ is given by reflexivity using our refined Kan
  structure from Theorem~\ref{thm:glue-kan-refined} plus that
  $\transp$ is given in terms of composition with an empty tube.

  The transport operation $\transp$ can easily be extended to an
  operation
  \[
   \trpeqv : \Pth{\U}\,a\,b \to  \Equiv\,(\El a)\,(\El b)
  \]
  which goes in the opposite direction as $\uaetop$.  Actually,
  $\uaetop$ and $\trpeqv$ constitute a section-retraction pair because
  of $\uabeta$ and the fact that $\isEquiv t.1$ is a proposition, that
  is, all its inhabitants are path-equal.  Hence also $\Sigma (b : \U)
  \, \Equiv\,(\El a)\,(\El b)$ is a retract of $\Sigma (b : \U) \,
  \Pth{\U}\,a\,b$.  Since $\U$ has a Kan structure by
  Theorem~\ref{thm:univ-kan}, the latter type is contractible (see
  \cite[Section~8.2]{BezemCoquandHuber14}) and thus so is the former,
  concluding the proof.
\end{proof}



\section{Identity types}\label{sec:Id}

We will now describe the identity type which justifies the usual
judgmental equality for its eliminator following Swan~\cite{Swan14}.

Let $\Gamma$ be a cubical set and $A,B\in \Ty(\Gamma)$, i.e., $A$ and
$B$ are presheaves on the category of elements of $\Gamma$.  For
natural transformations\footnote{Natural transformations $\alpha \co A
  \to B$ correspond to sections in $\Ter (\Gamma, A \to B)$, and also
  to maps between the projections $\Gamma.A \to \Gamma$ and $\Gamma.B
  \to \Gamma$ in the slice over $\Gamma$.  To simplify notation, we
  will write $\alpha$ for either of these.}  $\alpha \co A \to B$ we
are going to define a \emph{factorization} as $\alpha = p_\alpha \,
i_\alpha$ with $i_\alpha \co A \to M_\alpha$ and $p_\alpha \co
M_\alpha \to B$.  Furthermore, $i_{\alpha}$ will be a
\emph{cofibration} (i.e., has the lifting property w.r.t.\ any
acyclic fibration as formulated in Corollary~\ref{cor:equiv-to-afib})
and $p_{\alpha}$ will be equipped with an \emph{acyclic-fibration}
structure.  This factorization corresponds to Garner's factorization
using the refined small object argument~\cite{Garner09} specialized to
cubical sets.

For $\rho$ in $\Gamma(I)$ we will define the sets $M_\alpha \rho$
together with the restriction maps $M_\alpha \rho \to M_\alpha (\rho
f)$ (for $f \co J \to I$) and the components $M_\alpha \rho \to B
\rho$ of the natural transformation $p_\alpha$ by an inductive process
(see Remark~\ref{rem:m-alpha-construction} below).  The elements of
$M_\alpha \rho$ are either of the form $\i \, u$ with $u$ in $A \rho$
(and $\i$ considered as a constructor) and we set in this case $(\i \,
u) f = \i (u \, f)$ and $p_\alpha (\i \, u) = \alpha\,u$.  Or
the elements are of the form $(v, [J \mapsto \us])$ where $v \in
B\rho$, $J \subseteq I$, and $\us$ is a $J$-tube in $M_\alpha \rho$
over $v$ (meaning $p_\alpha \, u_{jb} = v \subst j b$).  In the latter
case we set $p_\alpha (v, [J \mapsto \us]) = v$ and for the
restrictions $(v,[J\mapsto \us])f = u_{jb}(f-j)$ if
$f(j)=b\in\set{0,1}$ for some $j\in J$, and $(v,[J\mapsto \us])f =
(vf,[Jf\mapsto\us f])$ if $f$ is defined on $J$.  Note that
restrictions do not increase the syntactic complexity of an element $m
\in M_\alpha \rho$.  This defines $M_\alpha \in \Ty (\Gamma)$ and we
set $i_\alpha\,u = \i\,u$.

\begin{remark}
  \label{rem:m-alpha-construction}
  This construction is rather subtle in a set-theoretic framework.
  One possible way to define this factorization is to first
  inductively define larger sets $M'_\alpha \rho$ containing all
  formal elements $\i\,u$ with $u \in A \rho$, and $(v, [J \mapsto
  \us])$ with $v \in B\rho$ and where $\us$ is represented by a family
  of elements $u_f \in M'_\alpha (\rho f)$ indexed by all $f\co K \to
  I$ with $f j = 0$ or $1$ for some $j \in J$, but without requiring
  compatibility.  On these sets one can then define maps $M'_\alpha
  \rho \to M'_\alpha (\rho f)$ and $M'_\alpha \rho \to B \rho$.  Given
  these maps, we can single out the sets $M_\alpha \rho \subseteq
  M'_\alpha \rho$ of the well-formed elements as in the definition
  above, on which the corresponding maps then induce restriction
  operations (satisfying the required equations) and the natural
  transformation $p_\alpha$.
\end{remark}

We use $M_\alpha, i_\alpha, p_\alpha$ in the following way. Let $A$ be
a Kan type and let $B=\Pth{A}$ be the Kan type of paths over $A$
without specified endpoints. (The Kan structure on $A$ induces the Kan
structure on $B$, much in the same way as shown
in~\cite{BezemCoquandHuber14} for types $\Pth{A}\,a\,b$.)  As
mentioned in Section~\ref{sec:paths}, transport along reflexivity
paths is not necessarily the identity function.  One could solve this
problem if one could recognize the reflexivity paths, which is not
possible in $\Pth{A}$.  Swan's~\cite{Swan14} solution to this problem
is to define a type equivalent to $\Pth{A}$ in which one can recognize
(representations of) reflexivity paths.  This is the type $M_\alpha$
with $\alpha \co A \to \Pth{A}$ mapping each $a$ in $A$ to its
reflexivity path.  The representation of the reflexivity path of $a$
in $M_\alpha$ is $\i\,a$, with $\i$ a constructor of the inductively
defined type $M_\alpha$, and recognizing $\i\,a$ is done through
pattern matching.  All the rest of the complicated definition above is
to make sure that $M_\alpha$ has the right Kan structure
(Lemma~\ref{lem:MalphaKan}), and that elimination generally has the
right properties (Corollary~\ref{cor:ialpha-cofib}).


\medskip
Constructors of the form $(v,[J\mapsto\us])$ equip $p_\alpha \co
M_\alpha \to B$ with an acyclic-fibration structure which (uniformly)
fills tubes $[J \mapsto \us]$ in $M_\alpha \rho$ over a filled cube
$v$ in $B \rho$.  Thus to, say, construct a path between specified
endpoints in $M_\alpha$ it is enough to give a path in $B$ between the
images of the endpoints under $p_\alpha$.

There are two important observations to make at this point: First,
this construction preserves smallness, i.e., $M_\alpha \in \Ty_0
(\Gamma)$ whenever $A, B \in \Ty_0 (\Gamma)$.  And, second, this
construction is stable under substitution: given $\sigma \co \Delta
\to \Gamma$ we have $M_\alpha \sigma = M_{\alpha \sigma}$, $i_\alpha
\sigma = i_{\alpha \sigma}$, and $p_\alpha \sigma = p_{\alpha
  \sigma}$.  Neither of these properties holds for the corresponding
factorization into an acyclic cofibration followed by a fibration
(sketched in~\cite[Section~3.5]{Huber15} for a special case).

\begin{lemma}
  \label{lem:MalphaKan}
  Given $\fstr \in \Fill (\Gamma,B)$ there is $\underline{M}_\alpha
  \fstr \in \Fill(\Gamma,M_{\alpha})$.  Moreover, this assignment is
  stable under substitution, i.e., $(\underline{M}_\alpha \fstr)
  \sigma = \underline{M}_{\alpha \sigma}(\fstr \sigma)$ for $\sigma
  \co \Delta \to \Gamma$.
\end{lemma}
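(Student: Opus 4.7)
The plan is to fill open boxes in $M_\alpha$ by projecting along $p_\alpha$ to $B$, filling there with $\fstr$, and then lifting back to $M_\alpha$ using the ``acyclic-fibration structure'' of $p_\alpha$ that is built into the very definition of $M_\alpha$ --- namely, the constructor of the form $(v,[J\mapsto \us])$. Concretely, given $\rho \in \Gamma(I)$ and an open box $[J \mapsto \us;\; (i,a) \mapsto u_{ia}]$ in $M_\alpha$ over $\rho$, first form
\[
  v \;=\; \fstr\,\rho\,[J \mapsto p_\alpha\,\us;\; (i,a) \mapsto p_\alpha\,u_{ia}] \;\in\; B\rho,
\]
which is well-defined since $p_\alpha$, being a natural transformation, carries adjacent-compatible data to adjacent-compatible data.

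Next, writing $b = 1-a$ for brevity, synthesize the missing face of the open box by means of the $B$-constructor:
\[
  u_{ib} \;:=\; \bigl(v \subst i b,\; [J \mapsto \us \subst i b]\bigr) \;\in\; M_\alpha\,\rho \subst i b.
\]
This is well-formed because $\us \subst i b$ is a $J$-tube over $v \subst i b$, again by naturality of $p_\alpha$. The family $\us,\, u_{ia},\, u_{ib}$ then constitutes a $(J \cup \set{i})$-tube in $M_\alpha\rho$ over $v$; the only new compatibility to check is between $u_{jc}$ (for $(j,c) \in J \times \set{0,1}$) and $u_{ib}$, which follows directly from the restriction rule of the $v$-constructor applied to $j \in J$. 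Define the filler to be
\[
  \underline{M}_\alpha\,\fstr\,\rho\,[J \mapsto \us;\; (i,a) \mapsto u_{ia}] \;:=\; \bigl(v,\; [J \cup \set{i} \mapsto \us,\, u_{ia},\, u_{ib}]\bigr).
\]
The required face equalities $u \subst j c = u_{jc}$ and $u \subst i a = u_{ia}$ are then immediate from the restriction equations of the constructor.

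For uniformity, let $f \co K \to I$ be allowed for the open box, so in particular defined on $J \cup \set{i}$. The ``defined on'' clause of the restriction rule for the constructor yields
\[
  \bigl(v,\,[J\cup\set{i}\mapsto\cdots]\bigr) f \;=\; \bigl(vf,\,[(J\cup\set{i})f \mapsto \us f,\, u_{ia}(f-i),\, u_{ib}(f-i)]\bigr).
\]
Uniformity of $\fstr$ together with naturality of $p_\alpha$ identifies $vf$ with $\fstr(\rho f)$ applied to the $f$-restricted box in $B$, and unfolding the two restriction clauses of the $v$-constructor exhibits $u_{ib}(f-i)$ as exactly the missing face that the same recipe would produce for the restricted open box $[Jf \mapsto \us f;\; (f(i),a) \mapsto u_{ia}(f-i)]$ over $\rho f$. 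Hence the displayed expression coincides with $\underline{M}_\alpha\,\fstr\,(\rho f)$ applied to the restricted box. Stability under substitution $\sigma \co \Delta \to \Gamma$ is then free of charge: the recipe depends only on $\fstr$, $\alpha$, $p_\alpha$, and the constructors of $M_\alpha$, each of which is compatible with $\sigma$ via the already-noted identities $M_\alpha\sigma = M_{\alpha\sigma}$, $p_\alpha\sigma = p_{\alpha\sigma}$, and $(\fstr\sigma)\rho = \fstr(\sigma\rho)$.

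The delicate point is recognizing that one cannot apply the acyclic-fibration structure of $p_\alpha$ directly to an open box, since that structure only fills genuine tubes while an open box is missing one lid; bridging this gap with the auxiliary face $u_{ib}$ (itself obtained from the same $B$-constructor, one dimension lower) is the key step, and once it is in place the remaining verifications are routine bookkeeping with the restriction equations from the inductive definition of $M_\alpha$.
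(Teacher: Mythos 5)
Your proof is correct and takes essentially the same approach as the paper's: project the open box to $B$ via $p_\alpha$, fill it with $\fstr$, manufacture the missing lid $u_{ib}$ with the tube-filling constructor (this is exactly the paper's $m_{i1} = (\bar\fstr\,\rho\,v, [J\mapsto\ms\subst i 1])$, since $\bar\fstr\,\rho\,v = (\fstr\,\rho\,v)\subst i 1$), and then package the full $(J\cup\{i\})$-tube over the filled cube with the same constructor. You spell out the uniformity and stability checks that the paper leaves implicit, but the construction is identical.
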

\begin{proof}
  Let $\rho \in \Gamma(I)$ and, say, $m = [ J \mapsto \ms; (i,0)
  \mapsto m_{i0}]$ be an open box in $M_\alpha$ over $\rho$.  We get
  an open box $v = [J \mapsto \vs; (i,0) \mapsto v_{i0}]$ in $B$ over
  $\rho$ by setting $v_{jb} = p_\alpha\,m_{jb}$.  We define
  \[
    \underline{M}_\alpha\,\fstr\,\rho\,m = %
    (\fstr\,\rho\,v, [J,i\mapsto \ms,m_{i0},m_{i1}])
  \]
  with $m_{i1} = ({\bar\fstr}\,\rho\,v, [J \mapsto \ms \subst i 1])$.
\end{proof}

\begin{lemma}
  \label{lem:lift-via-path}
  Given $(D,\fstr_D) \in \KTy (\Gamma.M_\alpha)$ and sections $s \in
  \Ter (\Gamma.A, D i_\alpha)$ and $s' \in \Ter (\Gamma.M_\alpha, D)$
  together with a homotopy
  \[
    e \in \Ter \bigl(\Gamma, \Pi(a : A)\,\Pth{D (i_\alpha\, a)}\,(s'
    (i_\alpha \,a))\,(s\,a)\bigr),
  \]
  it is possible to find a section $\tilde s \in \Ter
  (\Gamma.M_\alpha, D)$ such that $\tilde s i_\alpha = s \in
  \Ter(\Gamma.A,D i_\alpha)$.  Or stated as a diagram,
  we are given a commuting square
  \[
    \begin{tikzcd}
      \Gamma.A \rar[""{name=Eend, swap}] {(i_\alpha,s)} \dar
      [swap] {i_\alpha} & \Gamma.M_\alpha.D  
      \dar
      \\
      \Gamma.M_\alpha \rar[equal] \urar [""{name=Estart, near start },
      "s'" description] \uar[from=Estart, to=Eend, Rightarrow, bend
      right=10, near start, "e"] & \Gamma.M_\alpha
    \end{tikzcd}
  \]
  where the upper left triangle only commutes up to the homotopy $e$
  and the lower triangle commutes strictly, and we get a new diagonal
  lift where both triangles commute strictly.  Moreover, this
  assignment is stable under substitutions, i.e., given $\sigma \co
  \Delta \to \Gamma$, substituting the chosen diagonal lift $\tilde s$
  (for the data $\alpha,D,\fstr_D,s,s',e$) along $(\sigma\pp,\qq) \co
  \Delta.M_{\alpha \sigma} \to \Gamma.M_\alpha$ results in the chosen
  diagonal lift for the substituted data (where $\sigma$ is weakened
  appropriately if needed).
\end{lemma}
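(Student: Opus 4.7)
The plan is to define $\tilde{s}\,m$ for every $\rho \in \Gamma(I)$ and $m \in M_\alpha\,\rho$ by recursion on the syntactic structure of $m$, simultaneously constructing an auxiliary path $h\,m \in \Pth{D(\rho,m)}\,(s'\,m)\,(\tilde{s}\,m)$. This extra datum is what makes the recursion go through: it records a homotopy between $\tilde{s}$ and $s'$ on \emph{all} elements of $M_\alpha$ (not only on those in the image of $i_\alpha$), and it extends $e$ in the sense that the base clause $h\,(\i\,u) := e\,u$ is forced.

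For the base case $m = \i\,u$ with $u \in A\,\rho$, set $\tilde{s}\,m := s\,u$; this clause is exactly what forces the required strict equation $\tilde{s}\circ i_\alpha = s$. For the step case $m = (v,[J\mapsto\us])$ the induction hypothesis supplies compatible families $\tilde{s}\,u_{jb}$ and $h\,u_{jb}$ for $(j,b) \in J\times\set{0,1}$. Pick a fresh name $i$ and, using $\fstr_D$, fill the open box in $D$ over $(\rho\deg_i, m\deg_i)$ whose face at $(i,0)$ is $s'\,m$ (degenerate in $i$) and whose face at $(j,b)$ is $h\,u_{jb}\napp i$; the compatibility at the intersections relies on $(h\,u_{jb})\napp 0 = s'\,u_{jb}$, which matches the $(j,b)$-face of $s'\,m$ by naturality of $s'$. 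Take $h\,m$ to be this filler and $\tilde{s}\,m := (h\,m)\napp 1 \in D(\rho,m)$.

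Verifying compatibility with restrictions, $(\tilde{s}\,m)f = \tilde{s}\,(mf)$, proceeds by induction on $m$ with a case split on how $f$ interacts with the structure of $m$: for $m = \i\,u$ both sides equal $s\,(uf)$; for $m = (v,[J\mapsto\us])$ with $f$ defined on $J$ it follows from the IH plus uniformity of $\fstr_D$; and if $f$ sends some $j\in J$ to $b\in\set{0,1}$, it follows from the prescribed $(j,b)$-face of the Kan filler together with the IH for $u_{jb}$. Stability under $\sigma\co\Delta\to\Gamma$ is analogous, since $M_{\alpha\sigma} = M_\alpha\sigma$, $i_{\alpha\sigma} = i_\alpha\sigma$, and $\fstr_D$ is natural, so each recursive clause commutes with substitution.

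The main obstacle I anticipate is the careful bookkeeping required by the subtle inductive definition of $M_\alpha$ alluded to in Remark~\ref{rem:m-alpha-construction}: one must be sure that the recursion is performed on genuinely well-formed elements, and that the chosen Kan fillers in the step case respect the identifications imposed by the restriction equations for the $(v,[J\mapsto\us])$-constructors — which is precisely what uniformity of $\fstr_D$ buys at every stage.
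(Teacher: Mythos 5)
Your proof is correct and takes essentially the same approach as the paper: a simultaneous recursion on the syntactic complexity of $m \in M_\alpha\rho$, defining $\tilde s$ together with an auxiliary path (your $h$, the paper's $\tilde e$) from $s'$ to $\tilde s$, with the base clause forced to be $(s\,u, e\,u)$ and the step clause filling an open box in $D$ with $s'(\rho,m)$ as the $(i,0)$-lid and the recursively obtained paths on the $J$-faces. The only cosmetic slip is calling the $(i,0)$-lid ``$s'\,m$ degenerate in $i$''---the lid of an open box over $(\rho,m)\deg_i$ at $(i,0)$ is just $s'(\rho,m)\in D(\rho,m)$, no degeneration involved---but the intent is clear and the construction is the same.
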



\begin{proof}
  For $\rho \in \Gamma(I)$ and $m \in M_\alpha \rho$ we define $\tilde
  s (\rho,m) \in D (\rho,m)$ and a path $\tilde e (\rho,m)$ between
  $s' (\rho,m)$ and $\tilde s (\rho,m)$ in $D (\rho,m)$ by induction
  on the syntactic complexity of $m \in M_\alpha \rho$ such that
  $(\tilde s(\rho,m))f = \tilde s (\rho f, m f)$ and $(\tilde e
  (\rho,m)) f = \tilde e (\rho f,m f)$.  In case $m = \i\,u$ for $u
  \in A \rho$, we set $\tilde s (\rho, \i\,u) = s (\rho, u)$ and
  $\tilde e (\rho, \i\,u) = e (\rho, u)$.  In case $m = (v, [J \mapsto
  \ms])$, we set
  \[
    \tilde e (\rho, m) = %
    \nabs i \, \fstr_D\,(\rho \deg_i)\,[J \mapsto \ws;(i,0) \mapsto s'
    (\rho,m) ]
  \]
  where $w_{jb} = \tilde e (\rho \subst j b, m_{jb}) \napp i$, and
  correspondingly $\tilde s (\rho, m) = \tilde e (\rho, m) \napp 1$.
\end{proof}

If the Kan structure is an acyclic-fibration structure as in
Definition~\ref{def:acyclic-fibration}, that is, if we can fill tubes
without a closing lid, the above proof can be carried out without
$s'$.  This implies the following result, which expresses that
$i_{\alpha} \co A \to M_{\alpha}$ is a {\em cofibration}.

\begin{corollary}
  \label{cor:ialpha-cofib}
  Given $D \in \Ty (\Gamma.M_\alpha)$ with an acyclic-fibration
  structure and a section $s \in \Ter (\Gamma.A, D i_\alpha)$ it is
  possible to define a section $\tilde s \in \Ter (\Gamma.M_\alpha,
  D)$ such that $\tilde s i_\alpha = s \in \Ter (\Gamma.A, D
  i_\alpha)$.  That is, there is a diagonal lift in the diagram:
  \[
    \begin{tikzcd}
      \Gamma.A \rar{(i_\alpha,s)} \dar [swap] {i_\alpha} &
      \Gamma.M_\alpha.D
      \dar
      \\
      \Gamma.M_\alpha \rar[equal] \urar [dashed]{\tilde s}&
      \Gamma.M_\alpha
    \end{tikzcd}
  \]
  Moreover, this assignment is stable under substitution.
\end{corollary}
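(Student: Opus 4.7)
The plan is to adapt the proof of Lemma~\ref{lem:lift-via-path} by replacing the use of the Kan filler (which required a closing lid, hence the auxiliary section $s'$ and homotopy $e$) by a direct use of the acyclic-fibration structure on $D$. Let $\ext_D$ denote this acyclic-fibration structure. I would define $\tilde s (\rho, m) \in D (\rho,m)$ by induction on the syntactic complexity of $m \in M_\alpha \rho$ (in the sense of Remark~\ref{rem:m-alpha-construction}), simultaneously verifying the restriction equation $(\tilde s (\rho, m)) f = \tilde s (\rho f, m f)$ for all $f \co K \to I$.

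In the base case $m = \i\,u$ with $u \in A \rho$, I would set $\tilde s (\rho, \i\,u) = s (\rho, u)$. This directly gives $\tilde s\, i_\alpha = s$. In the inductive case $m = (v, [J \mapsto \ms])$ with $v \in B \rho$ and $\ms$ a $J$-tube in $M_\alpha \rho$ over $v$, the elements $w_{jb} := \tilde s(\rho \subst j b, m_{jb}) \in D (\rho\subst j b, m_{jb}) = D(\rho,m) \subst j b$ (given by induction) form a $J$-tube in $D$ over $(\rho,m)$, using the compatibility of the $m_{jb}$ together with the inductive restriction equation for $\tilde s$. I would then set
\[
\tilde s (\rho, m) = \ext_D\,(\rho, m)\,[J \mapsto \ws].
\]

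The main obstacle is verifying the restriction equation $(\tilde s (\rho,m)) f = \tilde s (\rho f, m f)$ in the inductive case. There are two subcases, paralleling the definition of the restrictions on $M_\alpha$: when $f(j) = b \in \set{0,1}$ for some $j \in J$, we have $(v,[J\mapsto \ms]) f = m_{jb} (f - j)$, and the equation follows because $\ext_D$ extends its given tube, so $\tilde s (\rho, m) \subst j b = w_{jb}$ which reduces to $\tilde s (\rho f, m_{jb} (f-j))$ by the inductive restriction equation applied to $m_{jb}$. When $f$ is defined on $J$, we have $(v,[J\mapsto\ms]) f = (v f, [J f \mapsto \ms f])$, and the equation follows from the uniformity of $\ext_D$ together with the inductive restriction equation applied to the $m_{jb}$.

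Finally, stability under substitution is a direct consequence of the stability observed for the factorization (namely $M_\alpha \sigma = M_{\alpha \sigma}$, $i_\alpha \sigma = i_{\alpha \sigma}$, $p_\alpha \sigma = p_{\alpha \sigma}$) and the naturality of $\ext_D$, since all ingredients in the inductive definition commute with substitution along $(\sigma \pp, \qq) \co \Delta.M_{\alpha \sigma} \to \Gamma.M_\alpha$; no additional verification beyond a straightforward induction is required.
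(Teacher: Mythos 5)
Your proof is correct, and it takes a genuinely different route from the paper's official proof. The paper's proof of this corollary passes through Lemma~\ref{lem:contractible}: it first observes that an acyclic-fibration structure on $D$ yields a Kan structure together with contractibility, then uses the center of contraction to produce $s'$ and the contraction paths to produce the homotopy $e$, and finally invokes Lemma~\ref{lem:lift-via-path} as a black box. Your proof instead redoes the induction on syntactic complexity from scratch, replacing the Kan filler (which needs a closing lid, hence $s'$) with the direct acyclic-fibration filler $\ext_D$ on the $J$-tube $\ws$; this eliminates the need for $s'$, $e$, and the auxiliary path $\tilde e$ entirely. Interestingly, the paper itself signals your approach in the sentence immediately preceding the corollary (``the above proof can be carried out without $s'$''), so you have expanded the route the authors hinted at rather than the one they chose to write out. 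The benefit of your approach is that it is self-contained and conceptually minimal—one only ever uses the hypothesis directly—whereas the paper's route reuses existing lemmas at the cost of routing through the Kan/contractibility decomposition. One small nitpick: in the sub-case $f(j)=b$, what you really want is the factorization $f = \subst j b (f-j)$ so that $\tilde s(\rho,m)\,f = (\tilde s(\rho,m)\subst j b)(f-j) = w_{jb}(f-j)$, and then the inductive restriction equation applied to $m_{jb}$ and $f-j$; your phrasing ``reduces to'' glosses over the intermediate restriction by $f-j$, but the argument is sound.
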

\begin{proof}
  By Lemma~\ref{lem:contractible} we know that $D$ has a Kan structure
  and is contractible.  From the contractibility we get a section $s'
  \in \Ter (\Gamma.M_\alpha,D)$ and a homotopy between $s' \,
  i_\alpha$ and $s$, and can thus apply Lemma~\ref{lem:lift-via-path}
  to get a strict diagonal filler.
\end{proof}

This also implies the following result, which expresses that
$i_{\alpha} \co A \to M_{\alpha}$ is a \emph{acyclic} cofibration as
soon as $\alpha$ has a well-behaved homotopy inverse.  Recall that
application $\ap\,\alpha\,p \in \Ter(\Gamma, \Pth{B}\,
(\alpha\,u)\,(\alpha\,v))$ of $\alpha\co A \to B$ to a path $p \in
\Ter(\Gamma,\Pth{A}\,u\,v)$ is given by $(\ap\,\alpha\,p)\rho = \nabs
i \, \alpha (p \rho \napp i)$ (see~\cite[Section~3.3.2]{Huber15}).

\begin{corollary}
  \label{cor:equiv-to-afib}
  Let $\alpha \co A \to B$ and assume we are given $\beta : B \to A$
  and sections
  \begin{align*}
    \eta
    &\in \Ter \bigl(\Gamma, \Pi (a:A)\, \Pth{A}\,(\beta (\alpha \,
      a))\,a\bigr),
    \\
    \epsilon
    &\in \Ter \bigl(\Gamma, \Pi (b : B)\,\Pth{B}\,(\alpha (\beta \,
      b))\,b\bigr), \text{ and}
    \\
    \tau
    &\in \Ter \bigl(\Gamma, \Pi (a: A)\, \Pth{}\,(\epsilon (\alpha \,
      a))\,(\ap\,\alpha\,(\eta\,a))\bigr),
  \end{align*}
  where the omitted subscript of the path-type in $\tau$ is
  $\Pth{B}\,(\alpha\,(\beta (\alpha \, a)))\,(\alpha\,a)$.  Then given
  $D \in \Ty (\Gamma.M_\alpha)$ with Kan structure $\fstr_D$ we can
  extend any section $s \in \Ter (\Gamma.A, D i_\alpha)$ to a section
  $\tilde{s} \in \Ter (\Gamma.M_\alpha, D)$ satisfying $\tilde s \,
  i_\alpha = s$.  Moreover, this assignment is stable under
  substitution.
\end{corollary}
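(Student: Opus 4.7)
My plan is to apply Lemma~\ref{lem:lift-via-path}, which reduces the task to exhibiting a section $s' \in \Ter(\Gamma.M_\alpha, D)$ together with a homotopy $e \in \Ter(\Gamma,\Pi(a:A)\,\Pth{D(i_\alpha\,a)}\,(s'(i_\alpha\,a))\,(s\,a))$; once $s'$ and $e$ are in place the lemma delivers a strict lift, and its stability under substitution propagates directly to $\tilde s$.

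To construct $s'$, I use that the constructors $(v,[J\mapsto\us])$ equip $p_\alpha \co M_\alpha \to B$ with an acyclic-fibration structure, so that any path in $B$ starting at a $p_\alpha$-image can be lifted to a path in $M_\alpha$ starting at the chosen preimage. Given $\rho \in \Gamma(I)$ and $m \in M_\alpha\,\rho$, set $b = p_\alpha\,m$; the path $\epsilon\,b$ in $B$ from $\alpha\,(\beta\,b) = p_\alpha\,(i_\alpha\,(\beta\,b))$ to $b$ then lifts through $p_\alpha$ to a path $\gamma_m$ in $M_\alpha$ from $i_\alpha\,(\beta\,b)$ to $m$, and transporting $s\,(\beta\,b) \in D\,(i_\alpha\,(\beta\,b))$ along $\gamma_m$ using $\fstr_D$ yields $s'(m) \in D\,m$.

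To construct $e$, let $a \in A\,\rho$. By construction $s'(i_\alpha\,a)$ is the $\fstr_D$-transport of $s\,(\beta\,(\alpha\,a))$ along $\gamma_{i_\alpha\,a}$, which lifts $\epsilon\,(\alpha\,a)$; meanwhile a more direct path in $M_\alpha$ from $i_\alpha\,(\beta\,(\alpha\,a))$ to $i_\alpha\,a$ is provided by $\ap\,i_\alpha\,(\eta\,a)$, and this lifts $\ap\,\alpha\,(\eta\,a)$. The triangle identity $\tau$ supplies a path-of-paths in $B$ between $\epsilon\,(\alpha\,a)$ and $\ap\,\alpha\,(\eta\,a)$, and lifting this two-dimensional datum through the acyclic fibration $p_\alpha$ produces a square in $M_\alpha$ having $\gamma_{i_\alpha\,a}$ and $\ap\,i_\alpha\,(\eta\,a)$ among its sides; transporting $s\,(\beta\,(\alpha\,a))$ along this square with $\fstr_D$, and then composing with the canonical path from $s\,(\beta\,(\alpha\,a))$ to $s\,a$ obtained by applying $s$ along $\eta\,a$, delivers $e(\rho,a)$. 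The main obstacle is exactly this final assembly: one has to set up the two-dimensional lift in $M_\alpha$ out of $\tau$ and the acyclic-fibration structure of $p_\alpha$, and track its $\fstr_D$-transport carefully enough that the resulting path has precisely the endpoints $s'(i_\alpha\,a)$ and $s\,a$. Stability under substitution is then automatic, since all primitive ingredients---the acyclic-fibration structure on $p_\alpha$, the Kan structure $\fstr_D$, the constructions $i_\alpha$ and $M_\alpha$, and the lift from Lemma~\ref{lem:lift-via-path}---are themselves stable.
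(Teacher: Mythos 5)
The proposal is correct and follows essentially the same route as the paper: reduce to Lemma~\ref{lem:lift-via-path}, build $s'$ by lifting $\epsilon(p_\alpha m)$ through the acyclic fibration $p_\alpha$ to a path $m^*$ and transporting $s(\beta(p_\alpha m))$ along it with $\fstr_D$, then build $e$ by lifting $\tau$ to a square in $M_\alpha$ relating $\ap\,i_\alpha(\eta\,a)$ to $(i_\alpha a)^*$ and composing an open box in $D$ over that square. Your description of the final open-box composition for $e$ is slightly looser than the paper's (which pins down the exact endpoints $s'(i_\alpha a)$ and $s\,a$ by composing the box in~\eqref{eq:homotopy-problem}), but the ingredients and their assembly coincide.
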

\begin{proof}
  It is sufficient to construct $s'$ and $e$ as in
  Lemma~\ref{lem:lift-via-path}.  To enhance readability we omit the
  arguments from $\Gamma$.

  First, given $m \in M_\alpha$ we have a path $m^*$ connecting
  $i_\alpha (\beta (p_\alpha m))$ to $m$, since the images of the
  endpoints under $p_\alpha$ are $\alpha (\beta (p_\alpha \, m))$ and
  $p_\alpha\,m$ which are connected by $\epsilon (p_\alpha\,m)$.  Thus
  the acyclic-fibration structure on $p_\alpha$ gives us a desired
  path $m^*$, which moreover lies over $\epsilon (p_\alpha\,m)$, i.e.,
  \begin{equation}
    \label{eq:lift-over-eps}
    p_\alpha (m^* \napp j) = \epsilon (p_\alpha\,m) \napp j \quad
    \text{for fresh $j$.}
  \end{equation}

  Next, we have $s (\beta (p_\alpha \, m)) \in D (i_\alpha (\beta
  (p_\alpha \, m)))$ which we then can transport to $D \, m$ using the
  Kan structure and the path $m^*$.  Thus we set
  \begin{equation}
    \label{eq:total-section}
    s' m := \bar\fstr_D \,(m^* \napp j)\,[(j,0) \mapsto s (\beta
    (p_\alpha \, m))].
  \end{equation}

  It remains to give a path $e\,a$ connecting $s' (i_\alpha \,a)$ to
  $s\,a$ in $D (i_\alpha \, a)$ for $a \in A$.  We have the two
  horizontal lines (in direction $j$) in
  \begin{equation}
    \label{eq:homotopy-problem}
    \begin{tikzcd}[column sep=2cm]
      s (\beta (\alpha \, a)) \rar
      & s \, a
      \\
      s (\beta (\alpha \, a))%
      \rar %
      \uar [equal]
      & s' (i_\alpha \, a) %
      \uar [dashed,swap] {e \, a}
    \end{tikzcd}
  \end{equation}
  where the top line is given by $s (\eta \, a \napp j)$ in $D
  (i_\alpha (\eta\,a \napp j))$ and the bottom line is given by a
  filling in $D ((i_\alpha \, a)^* \napp j)$ following the
  construction~\eqref{eq:total-section} of $s'$.  We want to construct
  the vertical dashed line in $D (i_\alpha \, a)$.  We can define this
  line using a composition on the open box specified
  in~\eqref{eq:homotopy-problem} as soon as we can provide an interior
  of the following square in $M_\alpha$ over
  which~\eqref{eq:homotopy-problem} is an open box:
  \begin{equation*}
    \begin{tikzcd}[column sep=2cm]
      i_\alpha (\beta (\alpha \, a)) %
      \rar {i_\alpha (\eta\,a \napp j)} %
      & i_\alpha \, a
      \\
      i_\alpha (\beta (\alpha \, a)) %
      \rar [swap] {(i_\alpha \, a)^* \napp j}
      \uar [equal]
      & i_\alpha \, a
      \uar [equal]
    \end{tikzcd}
  \end{equation*}
  But by~\eqref{eq:lift-over-eps}, mapping this square to $B$ using
  $p_\alpha$ has a filler given by $\tau\,a\napp k \napp j$ (where $k$
  extends vertically), and thus also a filler in $M_\alpha$ since
  $p_\alpha$ has an acyclic-fibration structure, concluding the proof.
\end{proof}

The representation of the identity type with the usual judgmental
equality for its eliminator follows from these results by considering
the case where $B$ is the type of paths without specified endpoints
$\Pth{A}$ over a type $A$ and $\alpha\,a$ is the constant path~$a$.
We get a factorization with $\Id{A} := M_\alpha$, $\refl := i_\alpha$,
and where the right vertical map is given by taking endpoints:
\begin{equation*}
  \begin{tikzcd}
    \Id{A} \rar 
    &
    \Path{A} \dar 
    &
    \\
    A \arrow{r} [swap] {\Delta} %
    \uar {\refl}%
    \urar {\alpha} %
    &
    A \times A
  \end{tikzcd}
\end{equation*}
This $\alpha$ satisfies the hypothesis of
Corollary~\ref{cor:equiv-to-afib} using the properties of path-types
from~\cite[Section~8.2]{BezemCoquandHuber14}, and hence $\refl \co A
\to \Id{A}$ has diagonal lifts against types with Kan structure. These
diagonal lifts serve as the interpretation of the eliminator (cf.\
\cite[p.52]{AwodeyWarren09}) and their choice is stable under
substitution, allowing us thus to interpret identity types.

One can also explain $\Id{A}$ with fixed endpoints as Kan type in
context $\Gamma.A.A\pp$ and then show that $\Id{A}\,u\,v$ is
$\Path$-equivalent to $\Pth{A}\,u\,v$.  It follows that a type is
$\Pth{}$-contractible if, and only if, it is $\Id{}$-contractible.
The univalence axiom for $\Pth{}$-types (Theorem~\ref{thm:ua-path})
hence also holds formulated with $\Id{}$-types.

We can summarize the results of this section as:
\begin{theorem}
  \label{thm:main}
  The cubical set model of~\cite{BezemCoquandHuber14,Huber15} supports
  identity types and validates the univalence axiom.
\end{theorem}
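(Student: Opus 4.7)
My plan is to combine the univalence theorem already proved in Theorem~\ref{thm:ua-path} with the factorization machinery developed in this section. The univalence half requires no further work, so the real content is assembling an interpretation of identity types from the $M_\alpha$ construction, and then transferring univalence from $\Pth{}$-types to $\Id{}$-types.

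For identity types I would specialize the factorization to $B := \Path{A}$ (paths in $A$ with free endpoints, which carries a Kan structure induced from $A$) and $\alpha \co A \to \Path{A}$ the map sending $a$ to the constant path at $a$. Setting $\Id{A} := M_\alpha$ and $\refl := i_\alpha$ immediately gives formation and introduction: $\Id{A}$ is a Kan type by Lemma~\ref{lem:MalphaKan}, it preserves smallness, and the whole package is stable under substitution since $M_\alpha \sigma = M_{\alpha \sigma}$. For the eliminator with its judgmental $\refl$-reduction I would invoke Corollary~\ref{cor:equiv-to-afib}, which produces, from any Kan type $D$ over $\Gamma.\Id{A}$ and any $s \in \Ter(\Gamma.A, D\,\refl)$, a section $\tilde s \in \Ter(\Gamma.\Id{A}, D)$ with $\tilde s\,\refl = s$ strictly. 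Stability of this lift under substitution, which is also supplied by the corollary, is precisely the coherence required to interpret $\Jelim$ up to reindexing.

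Applying the corollary requires that $\alpha$ come equipped with homotopy inverse data $\beta, \eta, \epsilon, \tau$; I expect this to be the one genuinely nontrivial step. I would take $\beta\,p := p\napp 0$, so that $\beta(\alpha\,a) = a$ strictly and $\eta$ can be chosen as pointwise reflexivity. The homotopy $\epsilon$ connecting $\alpha(\beta\,p)$, the constant path at the source endpoint of $p$, to $p$ itself is obtained from the standard operations on $\Path$-types, and the two-dimensional coherence $\tau$, once $\epsilon$ is fixed, reduces on inputs of the form $\alpha\,a$ to contracting a constant path to reflexivity in a single filling. This verification, while elementary, is the main obstacle in the whole program because one must keep all of the $\eta, \epsilon, \tau$ data uniform in the ambient context so that the final diagonal lift really does commute with substitution.

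Finally, to obtain $\Id{A}\,u\,v$ with specified endpoints I present the above $\Id{A}$ in context $\Gamma.A.A\pp$, as the excerpt already indicates, and reindex along $(u,v)$. The univalence axiom of Theorem~\ref{thm:ua-path} is formulated with $\Pth{}$-types, but since $\Id{A}\,u\,v$ and $\Pth{A}\,u\,v$ are $\Path$-equivalent, the two notions of contractibility coincide and univalence transfers directly to the $\Id{}$-formulation, completing the plan.
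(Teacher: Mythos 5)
Your proposal matches the paper's own argument essentially step for step: you specialize the factorization to $B = \Path{A}$ with $\alpha$ the constant-path map, set $\Id{A} := M_\alpha$ and $\refl := i_\alpha$, obtain Kan structure and smallness from Lemma~\ref{lem:MalphaKan} and the observations preceding it, interpret the eliminator via the substitution-stable lift of Corollary~\ref{cor:equiv-to-afib}, and transfer univalence from $\Pth{}$- to $\Id{}$-types through the $\Path$-equivalence of $\Id{A}\,u\,v$ and $\Pth{A}\,u\,v$. The paper leaves the verification that $\alpha$ satisfies the hypotheses of Corollary~\ref{cor:equiv-to-afib} as a citation to \cite[Section~8.2]{BezemCoquandHuber14}, whereas you spell out plausible concrete witnesses $\beta,\eta,\epsilon,\tau$; that extra detail is in the right spirit and the remaining routes agree.
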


\begin{acknowledgements*}
  We want to thank Cyril Cohen and Anders Mörtberg for several
  discussions around an implementation of this system, as well as
  Andrew Swan for discussions on the representation of the identity
  type in the cubical set model.
\end{acknowledgements*}

\bibliographystyle{amsplain}
\bibliography{csetua}

\end{document}